\documentclass[11pt,a4paper]{amsart}

\usepackage{a4wide}

\usepackage[english]{babel}

\usepackage[pdftex]{graphicx}



\usepackage[all]{xy}
\usepackage{amsmath,amsthm,amssymb,enumerate}
\usepackage{latexsym}
\usepackage{amscd}

\usepackage[colorlinks=true]{hyperref}

\newtheorem{theorem}{Theorem}[section]
\newtheorem{thm}[theorem]{Theorem}
\newtheorem{defn}[theorem]{Definition}
\newtheorem{prop}[theorem]{Proposition}

\newtheorem{lemma}[theorem]{Lemma}
\newtheorem{lem}[theorem]{Lemma}

\newtheorem{remark}[theorem]{Remark}
\newtheorem{rem}[theorem]{Remark}
\newtheorem{example}[theorem]{Example}

\newcommand{\cat}{\mathcal}

\newcommand{\R}{\mathbb R}
\newcommand{\Q}{\mathbb Q}
\newcommand{\Z}{\mathbb Z}
\newcommand{\N}{\mathbb N}
\newcommand{\C}{\mathbb C}
\newcommand{\T}{\mathbb T}
\newcommand{\CP}{\mathbb P}

\newcommand{\ft}{{\mathfrak{t}}}
\newcommand{\fk}{{\mathfrak{k}}}
\newcommand{\fg}{{\mathfrak{g}}}

\newcommand{\Xx}{{\cat X}}
\newcommand{\Ll}{{\cat L}}
\newcommand{\Mm}{{\cat M}}
\newcommand{\Nn}{{\cat N}}

\newcommand{\Pp}{{\cat P}}
\newcommand{\Dd}{{\cat D}}

\newcommand{\om}{{\omega}}
\newcommand{\vr}{{\varphi}}
\newcommand{\eps}{{\varepsilon}}

\newcommand{\p}{{\partial}}
\newcommand{\cp}{{{\C\CP}\,\!}}
\newcommand{\oz}{{\bar{z}}}

\newcommand{\tgamma}{{\tilde{\gamma}}}
\newcommand{\teta}{{\tilde{\eta}}}
\newcommand{\tR}{{\tilde{R}}}

\DeclareMathOperator{\sign}{sign}
\DeclareMathOperator{\rank}{rank}

\begin{document}
\title{Contact homology of good toric contact manifolds}

\author[M.~Abreu]{Miguel Abreu}
\address{Centro de An\'{a}lise Matem\'{a}tica, Geometria e Sistemas
Din\^{a}micos, Departamento de Matem\'atica, Instituto Superior T\'ecnico, Av.
Rovisco Pais, 1049-001 Lisboa, Portugal}
\email{mabreu@math.ist.utl.pt}
 
\author[L.~Macarini]{Leonardo Macarini}
\address{Universidade Federal do Rio de Janeiro, Instituto de Matem\'atica,
Cidade Universit\'aria, CEP 21941-909 - Rio de Janeiro - Brazil}
\email{leonardo@impa.br}

\date{April 12, 2011}

\begin{abstract}
In this paper we show that any good toric contact manifold has well defined
cylindrical contact homology and describe how it can be combinatorially computed
from the associated moment cone. As an application we compute the cylindrical
contact homology of a particularly nice family of examples that appear in the
work of Gauntlett-Martelli-Sparks-Waldram on Sasaki-Einstein metrics. We show
in particular that these give rise to a new infinite family of non-equivalent
contact structures on $S^2 \times S^{3}$ in the unique homotopy class of 
almost contact structures with vanishing first Chern class.
\end{abstract}

\keywords{toric contact manifolds; toric symplectic cones; contact homology}

\subjclass[2010]{53D42 (primary), 53D20, 53D35 (secondary)}

\thanks{Partially supported by Funda\c c\~ao para a 
Ci\^encia e a Tecnologia (FCT/Portugal), Funda\c c\~ao 
Coordena\c c\~ao de Aperfei\c coamento de Pessoal de N\'{\i}vel Superior 
(CAPES/Brazil) and Conselho Nacional de Desenvolvimento Cient\'{\i}fico 
e Tecnol\'ogico (CNPq/Brazil).}

\maketitle

\section{Introduction}
\label{s:intro}

Contact homology is a powerful invariant of contact structures, introduced by
Eliashberg, Givental and Hofer~\cite{EGH} in the bigger framework of Symplectic
Field Theory. Its simplest version is called cylindrical contact homology and can
be briefly described in the following way. Let $(N,\xi)$ be a closed (i.e. compact
without boundary) 
co-oriented contact manifold, $\alpha\in\Omega^1(N)$ a contact form 
($\xi = \ker \alpha$) and $R_\alpha\in\Xx(N)$ the corresponding Reeb vector field 
($\iota (R_\alpha) d\alpha \equiv 0$ and $\alpha (R_\alpha) \equiv 1$).
Assume that $\alpha$ is nondegenerate, with the meaning that all contractible closed orbits
of $R_\alpha$ are nondegenerate. Consider the graded $\Q$-vector space 
$C_\ast (N,\alpha)$ freely generated by the contractible closed orbits of $R_\alpha$, 
where the grading is determined by an appropriate dimensional shift of the 
Conley-Zehnder index (when the first Chern class of the contact structure is zero this
grading is integral, but otherwise it is jus a finite cyclic grading). 
One then uses suitable pseudo-holomorphic curves in the symplectization of $(N, \alpha)$ 
to define a linear map 
$\partial:C_\ast (N,\alpha) \to C_{\ast-1} (N,\alpha)$. Under suitable assumptions, one 
can prove that $\partial^2 = 0$ and the homology of $(C_\ast (N,\alpha), \partial)$
is independent of the choice of contact form $\alpha$. This is the cylindrical contact
homology $HC_\ast(N,\xi; \Q)$, a graded $\Q$-vector space invariant of the contact
manifold $(N, \xi)$. (Note: for transversality reasons, the identity $\partial^2 = 0$ and
the invariance property of contact homology are conditional on the completion of foundational 
work by Hofer, Wysocki and Zehnder.)

The simplest example of a cylindrical contact homology computation, already 
described in~\cite{EGH}, is the case of the standard contact sphere 
$(S^{2n+1}, \xi_{\rm st})$, where 
\[
S^{2n+1} \cong \{ z\in\C^{n+1} : \sum_{j=1}^{n+1} |z_j|^2 = 1 \} \subset \C^{n+1} 
\]
and
\[ 
\xi_{\rm st} := T S^{2n+1} \cap \,i\, T S^{2n+1} = \ 
\text{hyperplane field of complex tangencies.}
\]
This contact structure admits the natural contact form
\[
\alpha_{\rm st} := \frac{i}{2} \sum_{j=1}^{n+1} 
(z_j d \oz_j - \oz_j d z_j)|_{S^{2n+1}}
\]
with completely periodic Reeb flow given by
\[
(R_{\rm st})_s (z_1, \ldots, z_{n+1}) \mapsto (e^{is} z_1, \ldots, e^{is} z_{n+1})\,,\ 
s\in\R\,.
\]
In the presence of a degenerate contact form, such as $\alpha_{\rm st}$, there
are two approaches to compute cylindrical contact homology:
\begin{itemize}
\item[(i)] the Morse-Bott approach of Bourgeois~\cite{Bo}, that computes it 
directly from the spaces of contractible periodic orbits of the degenerate 
Reeb flow;
\item[(ii)] the nondegenerate approach above, that computes it from the
countably many contractible periodic orbits of the Reeb flow associated to 
a nondegenerate perturbation of the original degenerate contact form.
\end{itemize}
Since (ii) is the approach we will use in this paper, let us give 
a description of how it can work in this $(S^{2n+1}, \xi_{\rm st})$ example.
One can obtain a suitable perturbation of the contact form $\alpha_{\rm st}$
by perturbing the embedding of $S^{2n+1} \hookrightarrow \C^{n+1}$ via
\[
S^{2n+1} \cong S^{2n+1}_a := 
\{ z\in\C^{n+1} : \sum_{j=1}^{n+1} a_j |z_j|^2 = 1 \} \subset \C^{n+1}\,,\ 
\text{with $a_j \in\R^+$ for all $j=1, \ldots, n+1$,}
\]
and noting that
\[ 
\xi_{\rm st} \cong \xi_a :=  T S^{2n+1}_a \cap \,i\, T S^{2n+1}_a \,.
\]
The perturbed contact form $\alpha_a$ is then given by
\[
\alpha_a := \frac{i}{2} \sum_{j=1}^{n+1} (z_j d \oz_j - \oz_j d z_j)|_{S^{2n+1}_a}
\]
and the corresponding Reeb flow can be written as
\[
(R_a)_s (z_1, \ldots, z_{n+1}) \mapsto (e^{ia_1 s} z_1, \ldots, e^{i a_{n+1} s} 
z_{n+1})\,,\ s\in\R\,.
\]
If the $a_j$'s are $\Q$-independent, the contact form $\alpha_a$ is nondegenerate
and the Reeb flow has exactly $n+1$ simple closed orbits 
$\gamma_1, \ldots, \gamma_{n+1}$, where each $\gamma_\ell$ corresponds to the
orbit of the Reeb flow through the point $p_\ell \in S^{2n+1}_a$ with coordinates
\[
z_j = 
\begin{cases}
1/\sqrt{a_\ell} &\text{if $j=\ell$;} \\
0        &\text{if $j\ne\ell$.}
\end{cases} 
\]
As it turns out, in this example any closed Reeb orbit $\gamma_\ell^N$ has even
contact homology degree, which implies that the boundary operator is zero and
\[
HC_\ast (S^{2n+1}, \xi_{\rm st}; \Q) = C_\ast (\alpha_a)\,.
\]
After some simple Conley-Zehnder index computations one concludes that
\[
HC_\ast (S^{2n+1}, \xi_{\rm st}; \Q) \cong
\begin{cases}
\Q & \text{if $\ast \geq 2n$ and even;} \\
0 & \text{otherwise.}
\end{cases} 
\]

The standard contact sphere $(S^{2n+1}, \xi_{\rm st})$ is the most basic example 
of a good toric contact manifold and, as we show in this paper, this contact 
homology calculation has a toric description that generalizes to any good toric
contact manifold.

Closed toric contact manifolds are the odd dimensional analogues of
closed toric symplectic manifolds. They can be defined as contact
manifolds of dimension $2n+1$ equipped with an effective Hamiltonian action
of a torus of dimension $n+1$ and have been classified by
Banyaga-Molino~\cite{BnM1, BnM2, Bn}, Boyer-Galicki~\cite{BG} and 
Lerman~\cite{Le}.

Good toric contact manifolds of dimension three are $(S^3, \xi_{\rm st})$
and its finite quotients. Good toric contact manifolds of dimension greater
than three are closed toric contact manifolds whose torus action is not free. 
These form the most important class of closed toric contact manifolds and 
can be classified by the associated moment cones, in the same way that 
Delzant's theorem classifies closed toric symplectic manifolds by the 
associated moment polytopes.

In this paper we show that any good toric contact manifold has well defined
cylindrical contact homology and describe how it can be combinatorially 
computed from the associated moment cone.

To be more precise, consider the following definition.
\begin{defn}
A nondegenerate contact form is called \emph{nice} if its Reeb flow
has no closed contractible orbit of contact homology degree equal to $-1$, 
$0$ or $1$. A nondegenerate contact form is called \emph{even} if all closed 
contractible orbits of its Reeb flow have even contact homology degree.
\end{defn}
The following proposition is a direct generalization to even contact forms
of a well-known result for nice contact forms.
\begin{prop} \label{prop:even-1}
Let $(N,\xi)$ be a contact manifold with an even or nice nondegenerate 
contact form $\alpha$. Then the boundary operator
$\partial:C_\ast (N,\alpha) \to C_{\ast-1} (N,\alpha)$ satisfies $\partial^2 = 0$ 
and the homology of $(C_\ast (N,\alpha), \partial)$ is independent of the choice 
of even or nice nondegenerate contact form $\alpha$. Hence, the cylindrical 
contact homology $HC_\ast(N,\xi; \Q)$ is a well defined invariant of the contact
manifold $(N,\xi)$.
\end{prop}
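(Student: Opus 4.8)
The plan is to run the standard proof that cylindrical contact homology is well defined, invoking the hypothesis only at the two places where it is really used. For a \emph{nice} contact form this argument is sketched in~\cite{EGH} and developed in~\cite{Bo}: one establishes $\partial^{2}=0$ by analysing the SFT compactifications of the moduli spaces of $J$-holomorphic cylinders of virtual dimension $\leq 2$ in the symplectization $\R\times N$, and one establishes independence of $\alpha$ by the analogous analysis, for an exact symplectic cobordism interpolating between two contact forms, of the moduli spaces of cylinders of virtual dimension $\leq 1$, which produce a chain map $\Phi$ together with a chain homotopy inverse coming from the reversed cobordism. In either case the codimension-one boundary consists of genuinely broken cylinders — giving exactly the identities $\partial^{2}=0$, $\partial\Phi=\Phi\partial$, and so on — once one has ruled out two further phenomena: the bubbling off of holomorphic planes, and the appearance of multiply covered components along which generic transversality may fail. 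The requirement that no contractible Reeb orbit have contact homology degree in $\{-1,0,1\}$ is calibrated exactly to this. A holomorphic plane asymptotic to a contractible orbit $\delta$ occupies a moduli space whose virtual dimension depends only on $|\delta|$ and is, under this hypothesis, either negative — so that for generic data the plane does not exist — or large enough, respectively incompatible with the translational freedom, that the plane cannot occur next to a nonconstant cylinder in any of the compactifications we use; the simple curves underlying the problematic multiple covers are controlled in the same way, after which the standard transversality theory for somewhere-injective curves, together with the conjectural polyfold transversality flagged in the introduction, does the rest.

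I would then turn to the \emph{even} case. Here $\partial$ vanishes identically: it lowers degree by $1$ while $C_{\ast}(N,\alpha)$ is concentrated in even degrees, so the moduli spaces of cylinders of virtual dimension $1$ that define $\partial$ are empty. Hence $\partial^{2}=0$ is trivial and $HC_{\ast}(N,\alpha)=C_{\ast}(N,\alpha)$. For invariance one again uses an exact symplectic cobordism between two forms $\alpha_{0},\alpha_{1}$, each even or nice, with the induced chain map $\Phi$ and its homotopy inverse. The only way an even form can fail to be nice is by possessing contractible orbits of degree exactly $0$, and these now do no harm: degrees $\pm 1$ are odd and still absent, so the exclusion of planes and bad covers among the symplectization levels over an even end of the cobordism goes through by parity, and over a nice end by the degree restriction as before; the same exclusion is then checked, end by end, for the levels living in the cobordism, the evenness of an end furnishing the parity input that replaces the forbidden degree $0$. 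In the all-even case one sees it even more directly: every chain homotopy in sight has degree $+1$ on an evenly graded complex, hence vanishes, so $\Phi$ and $\Psi$ are mutually inverse on the nose and exhibit $C_{\ast}(\alpha_{0})\cong C_{\ast}(\alpha_{1})$. Either way $HC_{\ast}(N,\xi;\Q)$ is independent of the choice of even or nice nondegenerate contact form.

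The main obstacle I foresee is the one common to all of symplectic field theory: transversality for multiply covered curves. It is not peculiar to this proposition, and — as the introduction already stresses — the identity $\partial^{2}=0$ and the invariance of cylindrical contact homology are in any case conditional on the foundational program of Hofer--Wysocki--Zehnder. Granting that input, the single new point, and an easy one, is that a contractible orbit of degree $0$ — which niceness forbids but evenness tolerates — is harmless once every contractible orbit is even, since then the differential and every relevant chain homotopy vanish for parity reasons and no surviving configuration is of the kind whose exclusion the niceness argument needed degree $0$ for. The one step that calls for a little care, and the reason the statement is phrased for forms that are "even or nice" rather than for a single class, is the mixed situation, where the exclusion of planes and problematic multiple covers must be verified separately over each end of the cobordism and over the cobordism itself, using niceness at a nice end and parity at an even one.
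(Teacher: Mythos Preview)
Your proposal is correct and follows essentially the same approach as the paper: both run the standard cylindrical contact homology argument, observe that $\partial=0$ trivially in the even case, and establish invariance by excluding the problematic holomorphic buildings (pairs of pants together with planes) in the compactified moduli spaces for cobordisms and one-parameter families, using the absence of degree $\pm 1, 0$ orbits in the nice case and the impossibility of index-$1$ curves in the symplectization in the even case. The paper's proof is somewhat more explicit in enumerating the specific two-level buildings to be ruled out at each stage, whereas you summarize this as an end-by-end check; your additional remark that in the all-even case the chain homotopy $A$ itself vanishes by parity, giving a chain-level isomorphism, is a pleasant observation not made explicitly in the paper.
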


Our first main result is the following theorem.

\begin{thm} \label{thm:main1}
Any good toric contact manifold admits even nondegenerate toric contact forms.
The corresponding cylindrical contact homology, isomorphic to the chain complex
associated to any such contact form, is a well-defined invariant that
can be combinatorially computed from the associated good moment cone.
\end{thm}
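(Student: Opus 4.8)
The plan is to exhibit an explicit family of toric contact forms, compute the Conley-Zehnder indices of their closed Reeb orbits combinatorially from the moment cone, deduce that such forms are even, and then invoke Proposition~\ref{prop:even-1}. By the Banyaga-Molino, Boyer-Galicki and Lerman classification, a good toric contact manifold $(N^{2n+1},\xi)$ is determined by its good moment cone $C=\{x\in\ft^*:\langle x,v_i\rangle\ge 0,\ i=1,\dots,d\}$, where $v_1,\dots,v_d\in\ft=\Z^{n+1}$ are the primitive inward facet normals and goodness forces $C$ to be simple, with the Delzant-type requirement that along each ray the $n$ facet normals meeting there extend to a $\Z$-basis of $\ft$. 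The toric contact forms on $(N,\xi)$ are parametrized by the Reeb vectors $\nu$ in the interior of the dual cone $C^\vee\subset\ft$: each such $\nu$ gives a canonical toric contact form $\alpha_\nu$ whose Reeb flow is the flow of $\nu\in\ft$ on $N$. First I would check that for $\nu$ Diophantine in a suitable sense --- concretely, $\langle w,\nu\rangle\notin\Z$ for the finitely many weights $w$ occurring below --- the form $\alpha_\nu$ is nondegenerate.

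Next I would identify the closed orbits and compute their indices. Since $\mu(N)=C$ and the torus action over the relative interior of a ray $R_j$ has $n$-dimensional isotropy, the preimage of $R_j$ is a single embedded circle $\gamma_j$; for $\nu$ as above the closed Reeb orbits of $\alpha_\nu$ are exactly the $\gamma_j$ together with their iterates $\gamma_j^N$, and those generating $C_\ast(N,\alpha_\nu)$ are the contractible ones --- a condition, like $\pi_1(N)$ itself, read off from $C$. In the local toric model around $\gamma_j$ supplied by goodness, $\xi|_{\gamma_j}$ splits equivariantly into $n$ complex lines and the linearized return map of $\gamma_j^N$ becomes $\mathrm{diag}(e^{2\pi iN\theta_1^{(j)}},\dots,e^{2\pi iN\theta_n^{(j)}})$, where each $\theta_i^{(j)}$ is an explicit $\Q$-linear function of $\nu$ determined by the facet normals and the edge direction at $R_j$. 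Hence $\mu_{CZ}(\gamma_j^N)=\sum_{i=1}^n\bigl(2\lfloor N\theta_i^{(j)}\rfloor+1\bigr)$ in this trivialization, and correcting by the (combinatorially computable) difference between it and a trivialization extending over a capping disk --- a difference that changes $\mu_{CZ}$ by an even integer --- expresses the contact homology degree $|\gamma_j^N|$ as an explicit integer-valued function of $N$, $\nu$ and $C$. In particular $C_\ast(N,\alpha_\nu)$, as a graded $\Q$-vector space, is completely determined by the good moment cone.

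Finally, each summand $2\lfloor N\theta_i^{(j)}\rfloor+1$ is odd, so $\mu_{CZ}(\gamma_j^N)\equiv n\pmod 2$ for all $j$ and $N$; since the capping correction is even and the degree shift has parity $n$, every $|\gamma_j^N|$ is even, so $\alpha_\nu$ is an even nondegenerate toric contact form. Proposition~\ref{prop:even-1} then gives $\partial^2=0$ and the independence of $HC_\ast(N,\xi;\Q)$ from the chosen even nondegenerate form; and $\partial=0$ for parity reasons, so $HC_\ast(N,\xi;\Q)\cong C_\ast(N,\alpha_\nu)$, which by the previous step is computed combinatorially from $C$. The step I expect to demand the most care is the local computation pinning down the rotation numbers $\theta_i^{(j)}$ and the capping correction --- precisely where goodness of the cone is used --- because once that index formula is secured, nondegeneracy for generic $\nu$, evenness, combinatorial computability, and independence of the choice via Proposition~\ref{prop:even-1} all follow readily.
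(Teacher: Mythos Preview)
Your outline is sound and lands at the same formula and parity conclusion as the paper, but the route differs in a way worth noting. You propose to work \emph{locally}: split $\xi|_{\gamma_j}$ into weight lines in a toric chart near each edge, read off rotation numbers $\theta_i^{(j)}$, and then add an even ``capping correction'' to pass from the local trivialization to one extending over a disk. The paper instead works \emph{globally upstairs}: it uses the explicit model $W_C=Z/K\subset\C^d/K$, lifts $R_\nu$ to a carefully chosen linear vector $\tR_\nu^\ell\in\R^d$ (chosen so that the lift $\tgamma_\ell$ of $\gamma_\ell$ is a \emph{closed} orbit in $Z$), and then invokes Lemma~\ref{lem:main} on the invariance of $\mu_{CZ}$ under symplectic reduction to conclude $\mu_{CZ}(\gamma_\ell^N)=\mu_{CZ}(\tgamma_\ell^N)$. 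The payoff is that on $\C^d$ the trivialization is the obvious global one, so no capping correction needs to be tracked separately: the extra even term $2N\sum_j(\teta_\ell)_j$ appears automatically in the upstairs computation and plays exactly the role of your correction. In short, the step you correctly flag as most delicate---pinning down the capping correction combinatorially---is precisely what the paper's reduction lemma is designed to absorb; your approach would work, but you would essentially be reproving that lemma in local coordinates. One small point: your nondegeneracy condition should be that $\nu$ generates a dense one-parameter subgroup of $\T^{n+1}$ (equivalently, the relevant $b_i^\ell/b^\ell$ are irrational), not a finite Diophantine condition, since you need nondegeneracy for all iterates $N$.
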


By applying this theorem to a particularly nice family of examples, originally
considered by the mathematical physicists J.~Gauntlett, D.~Martelli, J.~Sparks 
and D.~Waldram, in the context of their work on Sasaki-Einstein 
metrics~\cite{GW1} (see also~\cite{MSY} and \cite{Ab}), we obtain the 
second main result of this paper.

\begin{thm} \label{thm:main2}
There are infinitely many non-equivalent contact structures $\xi_k$ on 
$S^2 \times S^3$, $k\in\N_0$, in the unique homotopy class determined by the 
vanishing of the first Chern class. These contact structures are toric and can 
be distinguished by the degree zero cylindrical contact homology. More precisely:
\[
\rank HC_\ast (S^2  \times S^3, \xi_k ; \Q) =
\begin{cases}
k & \text{if $\ast = 0$;} \\
2k+1  & \text{if $\ast = 2$;} \\
2k+2 & \text{if $\ast > 2$ and even;} \\
0 & \text{otherwise.}
\end{cases} 
\]
\end{thm}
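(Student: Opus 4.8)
\emph{Step 1 (toric models).} The strategy is to realize each $\xi_k$ as a good toric contact manifold and apply Theorem~\ref{thm:main1}. Concretely, following Gauntlett--Martelli--Sparks--Waldram~\cite{GW1} (see also~\cite{MSY},~\cite{Ab}), I would first write down, for each $k\in\N_0$, a good moment cone $C_k\subset(\R^3)^*$: a cone over a quadrilateral, cut out by four primitive inward conormals $v_1,\dots,v_4\in\Z^3$ that, after a $\mathrm{GL}(3,\Z)$ change of basis, are given by an explicit normal form depending linearly on $k$, and I would check Lerman's goodness condition (each pair of conormals spanning a two-dimensional face extends to a $\Z$-basis). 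One then imports from~\cite{GW1},~\cite{Ab} that the associated toric contact manifold is diffeomorphic to $S^2\times S^3$ and that the induced almost contact structure has $c_1=0$; since obstruction theory shows that $S^2\times S^3$ admits a unique homotopy class of almost contact structures with vanishing first Chern class, all the $\xi_k$ lie in that class.

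\emph{Step 2 (reduction to a count).} By Theorem~\ref{thm:main1} each $C_k$ carries an even nondegenerate toric contact form $\alpha_k$; concretely, one takes the Reeb vector to be a generic vector $\nu$ in the interior of $C_k$, for which the Reeb flow is nondegenerate and its closed orbits are exactly the circles $\gamma_1,\dots,\gamma_4$ lying over the four one-dimensional faces of $C_k$, together with their iterates $\gamma_i^N$ (all contractible, since $S^2\times S^3$ is simply connected). Evenness of $\alpha_k$ forces $\partial=0$ on $C_\ast(S^2\times S^3,\alpha_k)$, exactly as in the sphere example of the Introduction, so $HC_\ast(S^2\times S^3,\xi_k;\Q)\cong C_\ast(\alpha_k)$ and the computation reduces to sorting the orbits $\gamma_i^N$ by contact homology degree.

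\emph{Step 3 (the index bookkeeping).} This is the heart of the proof. The degree of $\gamma_i^N$ is an appropriate shift of its Conley--Zehnder index, which for a toric orbit over a one-dimensional face is read off from $N$ times the two rotation numbers of the linearized Reeb flow; by the combinatorial recipe established in the proof of Theorem~\ref{thm:main1}, these rotation numbers --- hence the degree --- are explicit floor-function expressions in $\nu$ and the conormals $v_1,\dots,v_4$. Sorting the $\gamma_i^N$ for the four-faceted cone $C_k$, I would verify that no orbit has odd degree (reconfirming evenness), that exactly $k$ of them have degree $0$, that degree $2$ occurs with multiplicity $2k+1$, and that every even degree above $2$ occurs with multiplicity $2k+2$ --- which is precisely the displayed formula. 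I expect this step to be the main obstacle: it requires a precise grip on the lattice geometry of $C_k$, a careful analysis of how $\mathrm{CZ}(\gamma_i^N)$ grows with $N$, and an induction (or a case analysis on the residues of $N$ modulo the relevant integers) to control the low-degree contributions uniformly in $k$. The final answer can be cross-checked against a Morse--Bott computation in the style of Bourgeois~\cite{Bo} applied to the distinguished quasi-regular Reeb flow on these manifolds.

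\emph{Step 4 (conclusion).} Since $\rank HC_0(S^2\times S^3,\xi_k;\Q)=k$ and, by Proposition~\ref{prop:even-1} together with Theorem~\ref{thm:main1}, cylindrical contact homology is an invariant of the contact manifold up to contactomorphism, the structures $\xi_0,\xi_1,\xi_2,\dots$ are pairwise non-equivalent; together with Step~1 this completes the proof.
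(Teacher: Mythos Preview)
Your proposal is correct and follows essentially the same route as the paper: one writes down the explicit four-faceted good cones $C(k)$ coming from the $Y^{p,q}$ family, identifies the resulting toric contact manifolds with $(S^2\times S^3,\xi_k)$ with $c_1=0$, and then applies the algorithm of Section~\ref{s:proof1} to compute the Conley--Zehnder indices of the four simple closed Reeb orbits and their iterates, obtaining the displayed ranks. One small slip: in Step~2 the Reeb vector $\nu$ should be a generic vector in the interior of the \emph{dual} cone (i.e.\ a positive combination of the normals, as in Proposition~\ref{prop:sasaki}), not in the moment cone $C_k\subset(\R^3)^*$ itself; and note that the paper also carries out the computation for a second family of Reeb vectors $R_\nu\approx\nu_j$, which concentrates the contact homology on a single orbit and serves as a useful cross-check.
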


\begin{rem}
All even nondegenerate toric contact forms that we consider for this family of
examples have exactly four simple closed Reeb orbits. As we will see in
Section~\ref{s:examples}, by a suitable choice of such contact forms it is
possible to concentrate all relevant contact homology information in the
multiples of a single simple closed Reeb orbit: the one with minimal
action or, equivalently, minimal period.
\end{rem}

\begin{rem}
Otto van Koert constructs in~\cite{vK2} an infinite family of non-equivalent 
contact structures on $S^2 \times S^3$ with vanishing first Chern class. Since 
his contact structures have vanishing degree zero contact homology, they are 
necessarily different from the ones given by the $k>0$ cases of 
Theorem~\ref{thm:main2}. We will see that $(S^2  \times S^3, \xi_0)$ is 
contactomorphic to the unit cosphere bundle of $S^3$.

A recent preprint by J.~Pati~\cite{Pa} discusses a generalization of the Morse-Bott
approach of Bourgeois to compute the contact homology of $S^1$-bundles over
certain symplectic orbifolds and applies it to toric contact manifolds. His
explicit examples do not overlap with the ones in this paper.

Note also the recent preprint by M.J.D.~Hamilton~\cite{Ha} discussing inequivalent
contact structures on simply-connected $5$-manifolds which arise as $S^1$-bundles
over simply-connected $4$-manifolds. His contact structures have non-zero first
Chern class.
\end{rem}

The paper is organized as follows. Section~\ref{s:tscones} contains the necessary 
introduction to toric contact manifolds, their classification and main properties. 
The Conley-Zehnder index is described in Section~\ref{index}, where we also give a 
proof of its invariance property under symplectic reduction by a circle action
(Lemma~\ref{lem:main}). This result, which plays an important role in the paper 
and could also be of independent interest, is known to experts but we could not 
find a reference. Section~\ref{s:cch} gives a more detailed description of cylindrical contact 
homology and contains a proof of Proposition~\ref{prop:even-1}
(restated there as Proposition~\ref{prop:even-2}). Section~\ref{s:proof1} contains 
the proof of Theorem~\ref{thm:main1}, while the examples and cylindrical contact 
homology computations relevant for Theorem~\ref{thm:main2} are the subject of 
Section~\ref{s:examples}.

\subsection*{Notation} In this paper, unless explicitly stated otherwise,
closed Reeb orbit means contractible closed Reeb orbit.

\subsection*{Acknowledgements} We thank Urs Frauenfelder, Gustavo Granja, 
Viktor Ginzburg, Umberto Hryniewicz, Otto van Koert and David Martinez for several useful 
discussions regarding this paper. We also thank an anonymous referee for corrections and useful
suggestions. 

We thank IMPA and IST for the warm hospitality during the preparation of this work.

These results were first presented by the first author at the Workshop on Conservative
Dynamics and Symplectic Geometry, IMPA, Rio de Janeiro, Brazil, August 3--7, 2009. He
thanks the organizers for the opportunity to participate in such a wonderful event.

\section{Toric contact manifolds} 
\label{s:tscones}

In this section we introduce toric contact manifolds via toric 
symplectic cones and describe their classification and explicit 
construction via the associated moment cones. We will also describe 
the fundamental group and the first Chern class of a toric symplectic 
cone, as well as the space of toric contact forms and Reeb vector 
fields that are relevant in this context.
For further details see Lerman's papers~\cite{Le,Le1,Le2}.

\subsection{Symplectic cones}

\begin{defn} \label{def:scone}
A \emph{symplectic cone} is a triple $(W, \omega, X)$, where 
$(W,	\omega)$ is a connected symplectic manifold, i.e. 
$\omega\in\Omega^2(W)$ is a closed and nondegenerate $2$-form, 
and $X\in\Xx (W)$ is a vector field generating a proper 
$\R$-action $\rho_t:W\to W$, $t\in\R$, such that
$\rho_t^\ast (\omega) = e^{2t} \omega$. Note that the 
\emph{Liouville vector field} $X$ satisfies
$\Ll_X \omega = 2\omega$, or equivalently
\[
\omega = \frac{1}{2} d (\iota(X) \omega)\,.
\]
A \emph{closed} symplectic cone is a symplectic cone
$(W, \omega, X)$ for which the quotient $W/\R$ is closed.
\end{defn}

\begin{defn} \label{def:ccontact}
A \emph{co-orientable contact manifold} is a pair $(N, \xi)$, where
$N$ is a connected odd dimensional manifold and $\xi \subset TN$ 
is an hyperplane distribution globally defined by $\xi = \ker \alpha$
for some $\alpha\in\Omega^1(N)$ such that $d\alpha|_\xi$ is
non-degenerate. Such a $1$-form $\alpha$ is called a \emph{contact form} 
for $\xi$ and the non-degeneracy condition is equivalent to $\xi$ being
\emph{maximally non-integrable}, i.e. its integral submanifolds have at most
half of its dimension.

A \emph{co-oriented} contact manifold is a triple $(N, \xi, [\alpha])$,
where $(N,\xi)$ is a co-orientable contact manifold and $[\alpha]$ is the
conformal class of some contact form $\alpha$, i.e.
\[
[\alpha] = \left\{e^h \alpha \ |\ h\in C^\infty(N) \right\}\,.
\]
\end{defn}

Given a co-oriented contact manifold $(N, \xi, [\alpha])$, with contact
form $\alpha$, let
\[
W:=N\times\R\,,\ \omega := d(e^t \alpha) \quad\text{and}\quad
X:= 2\frac{\partial}{\partial t}\,,
\]
where $t$ is the $\R$-coordinate. Then $(W,\omega,X)$ is a 
symplectic cone, usually called the \emph{symplectization} of
$(N, \xi, [\alpha])$.

Conversely, given a symplectic cone $(W, \omega, X)$ let 
\[
N := W/\R\,,\ \xi := \pi_\ast (\ker (\iota(X) \omega))
\quad\text{and}\quad 
\alpha := s^\ast (\iota(X) \omega)\,,
\]
where $\pi:W\to N$ is the natural principal $\R$-bundle quotient 
projection and $s:N\to W$ is any global section (note that such global 
sections always exist, since any principal $\R$-bundle is trivial). 
Then $(N,\xi, [\alpha])$ is a  co-oriented contact manifold whose symplectization 
is the symplectic cone $(W, \omega, X)$.

In fact, we have that
\begin{center}
co-oriented contact manifolds $\overset{1:1}{\longleftrightarrow}$ 
symplectic cones
\end{center}
(see Chapter 2 of~\cite{Le1} for details).
Under this bijection, closed contact manifolds correspond to closed symplectic cones 
and toric contact manifolds correspond to toric symplectic cones (see below). Moreover,
the following are equivalent:
\begin{itemize}
\item[(i)] choice of a contact form for $(N, \xi, [\alpha])$;
\item[(ii)] choice of a global section of $\pi:W\to N$;
\item[(iii)] choice of an $\R$-equivariant splitting $W\cong N\times\R$.
\end{itemize}

The choice of a contact form $\alpha$ for a contact manifold $(N,\xi)$ gives rise
to the \emph{Reeb vector field} $R_\alpha \in\Xx(N)$, uniquely defined by
\[
\iota(R_\alpha) d\alpha \equiv 0
\quad\text{and}\quad
\alpha (R_\alpha) \equiv 1 \,,
\]
and corresponding \emph{Reeb flow} $(R_\alpha)_s : N \to N$ satisfying
\[
(R_\alpha)^\ast_s (\alpha) = \alpha\,,\ \forall\,s\in\R\,.
\]
The obvious horizontal lift of $R_\alpha$ to the symplectic cone 
$(W=N\times\R, \omega = d(e^t \alpha),X = 2\frac{\partial}{\partial t})$
will also be denoted by $R_\alpha$. It satisfies
\[
[R_\alpha, X] = 0 \quad\text{and}\quad \iota(R_\alpha)\omega = - d (e^t)\,.
\]
In other words, the lift of the Reeb flow is $X$-preserving and Hamiltonian.

\begin{remark}
On a symplectic cone $(M,\omega,X)$, any $X$-preserving symplectic action of
a Lie group $G$ is Hamiltonian. In fact, the map $\mu : M \to \fg^\ast$ defined by
\[
\langle \mu , Y \rangle = \omega (X, Y_M)\,,\ \forall\, Y\in\fg\,,
\]
where $Y_M$ is the vector field on $M$ induced by $Y$ via the $G$-action,
is a moment map~\cite{Le1}.
\end{remark}

\begin{remark} \label{rmk:chern-class}
Any co-oriented contact manifold $(N, \xi, [\alpha])$ has well defined Chern
classes 
\[
c_k (\xi) \in H^{2k} (N;\Z)\,,\ k=1, \ldots,n\,,
\] 
given by the Chern classes of the conformal symplectic vector bundle
\[
(\xi, [d\alpha|_\xi]) \longrightarrow N\,.
\]
Under the canonical isomorphism $\pi^\ast : H^\ast (N;\Z) \to H^\ast (W, \Z)$,
induced by the natural principal $\R$-bundle projection $\pi:W\to N$, these 
Chern classes coincide with the Chern classes of the tangent bundle of the
symplectization $(W, \omega, X)$. In fact
\[
(TW, \omega) \cong \varepsilon^2 \oplus \pi^\ast (\xi, [d\alpha|_\xi])\,,
\]
where $\varepsilon^2$ is a trivial rank-$2$ symplectic vector bundle. The
choice of a contact form $\alpha$ gives rise to an explicit isomorphism
\[
\varepsilon^2 \cong \text{span} \{X, R_\alpha\} \quad\text{and}\quad
\pi^\ast (\xi) \cong (\text{span} \{X, R_\alpha\})^\omega\,.
\]
\end{remark}

\begin{example} \label{ex:R}
The most basic example of a symplectic cone is $\R^{2(n+1)}\setminus\{0\}$ with 
linear coordinates 
\[
(u_1, \ldots, u_{n+1}, v_1, \ldots, v_{n+1})\,,
\]
symplectic form  
\[
\om_{\rm st} = du \wedge dv := \sum_{j=1}^{n+1} du_j \wedge dv_j
\]
and Liouville vector field
\[
X_{\rm st} = u\frac{\p}{\p u} +  v\frac{\p}{\p v}
:= \sum_{j=1}^{n+1} \left( u_j\frac{\p}{\p u_j} +  v_j\frac{\p}{\p v_j}\right)\,.
\]
The associated co-oriented contact manifold is isomorphic to $(S^{2n+1}, \xi_{\rm st})$,
where $S^{2n+1} \subset \C^{n+1}$ is the unit sphere and $\xi_{\rm st}$ is the hyperplane 
distribution of complex tangencies, i.e.
\[
\xi_{\rm st} = T S^{2n+1} \cap i \, T S^{2n+1}\,.
\]
The restriction of $\alpha_{\rm st} : = \iota (X_{\rm st}) \om_{\rm st}$ to $S^{2n+1}$ is 
a contact form  for $\xi_{\rm st}$. Its Reeb flow $(R_{\rm st})_s$ is the restriction to 
$S^{2n+1}$ of the diagonal flow on $\C^{n+1}$ given by
\[
(R_{\rm st})_s \cdot (z_1,\ldots, z_{n+1}) = 
(e^{i s}z_1, \ldots, e^{i s}z_{n+1})\,,
\]
where
\[
z_j = u_j + i v_j\,,\ j=1,\ldots, n+1\,,
\]
give the usual identification $\R^{2(n+1)}\cong\C^{n+1}$.
\end{example}

\begin{example} \label{ex:boothby-wang}
Let $(M,\omega)$ be a symplectic manifold such that the cohomology class 
\[
\frac{1}{2\pi}[\omega] \in H^2(M,\R) \ 
\text{is integral, i.e. in the image of the natural map $H^2(M,\Z) \to H^2(M, \R)$.}
\] 
Suppose that $H^2(M,\Z)$ has no torsion, so that the above natural map is injective and
we can consider $H^2(M,\Z) \subset H^2 (M,\R)$. Denote by $\pi:N\to M$ the principal 
circle bundle with first Chern class 
\[
c_1 (N) = \frac{1}{2\pi}[\omega]\,.
\] 
A theorem of Boothby and Wang~\cite{BW} asserts that there is a connection $1$-form 
$\alpha$ on $N$ with $d\alpha = \pi^\ast\omega$ and, consequently, 
$\alpha$ is a contact form. We will call $(N, \xi:=\ker(\alpha))$ the \emph{Boothby-Wang} 
manifold of $(M, \omega)$. The associated symplectic cone is the total space of the 
corresponding line bundle $L\to M$ with the zero section deleted.
The Reeb vector field $R_\alpha$ generates the natural
$S^1$-action of $N$, associated to its circle bundle structure.

When $M=\cp^n$, with its standard Fubini-Study symplectic form, we recover 
Example~\ref{ex:R}, i.e. $(N,\xi) \cong (S^{2n+1}, \xi_{\rm st})$ and
$\pi:S^{2n+1}\to \cp^n$ is the Hopf map.
\end{example}

\subsection{Toric symplectic cones}

\begin{defn} \label{def:tscone}
A \emph{toric symplectic cone} is a symplectic cone $(W,\omega,X)$ of 
dimension $2(n+1)$ equipped with an effective $X$-preserving symplectic
$\T^{n+1}$-action, with moment map $\mu:W\to \ft^\ast \cong \R^{n+1}$ 
such that $\mu(\rho_t(w)) = e^{2t} \rho_t (w)\,,\ \forall\, w\in W,\, 
t\in\R$. Its \emph{moment cone} is defined to be the set
\[
C := \mu(W) \cup \{0\} \subset \R^{n+1}\,.
\]
\end{defn}

\begin{example} \label{ex:toric-R}
Consider the usual identification $\R^{2(n+1)}\cong\C^{n+1}$ given by
\[
z_j = u_j + i v_j\,,\ j=1,\ldots, n+1\,,
\]
and the standard $\T^{n+1}$-action defined by
\[
(y_1, \ldots, y_{n+1}) \cdot (z_1,\ldots, z_{n+1}) = 
(e^{i y_1}z_1, \ldots, e^{i y_{n+1}}z_{n+1})\,.
\]
The symplectic cone $(\R^{2(n+1)}\setminus\{0\}, \om_{\rm st}, X_{\rm st})$
of Example~\ref{ex:R} equipped with this $\T^{n+1}$-action is a toric symplectic 
cone. The moment map $\mu_{\rm st}:\R^{2(n+1)}\setminus\{0\} \to \R^{n+1}$ is 
given by
\[
\mu_{\rm st} (u_1, \ldots, u_{n+1}, v_1, \ldots, v_{n+1}) = 
\frac{1}{2} (u_1^2 + v_1^2, \ldots, u_{n+1}^2 + v_{n+1}^2)\,.
\]
and the moment cone is 
$
C = (\R_0^+)^{n+1} \subset \R^{n+1}\,.
$
\end{example}

In~\cite{Le} Lerman completed the classification of closed toric 
symplectic cones, initiated by Banyaga and Molino~\cite{BnM1,BnM2,Bn} 
and continued by Boyer and Galicki~\cite{BG}. The ones that are relevant 
for toric K\"ahler-Sasaki geometry are characterized by having good 
moment cones.

\begin{defn}[Lerman] \label{def:gcone}
A cone $C\subset\R^{n+1}$ is \emph{good} if there exists a minimal set 
of primitive vectors $\nu_1, \ldots, \nu_d \in \Z^{n+1}$, with 
$d\geq n+1$, such that
\begin{itemize}
\item[(i)] $C = \bigcap_{j=1}^d \{x\in\R^{n+1}\,:\ 
\ell_j (x) := \langle x, \nu_j \rangle \geq 0\}$.
\item[(ii)] any codimension-$k$ face of $C$, $1\leq k\leq n$, 
is the intersection of exactly $k$ facets whose set of normals can be 
completed to an integral base of $\Z^{n+1}$.
\end{itemize}
\end{defn}

\begin{theorem}[Banyaga-Molino, Boyer-Galicki, Lerman] \label{thm:gcone}
For each good cone $C\subset\R^{n+1}$ there exists a unique closed toric 
symplectic cone $(W_C, \om_C, X_C, \mu_C)$ with moment cone $C$.
\end{theorem}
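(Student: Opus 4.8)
\textbf{Proof proposal for Theorem~\ref{thm:gcone}.}

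The plan is to establish existence by an explicit symplectic (contact) reduction from the model symplectic cone of Example~\ref{ex:toric-R}, and uniqueness by a Delzant-type argument adapted to the conical setting. For existence, given a good cone $C = \bigcap_{j=1}^d\{\ell_j \geq 0\}$ with primitive inward normals $\nu_1,\dots,\nu_d\in\Z^{n+1}$, I would consider the surjective linear map $\beta:\R^d\to\R^{n+1}$ sending the $j$-th standard basis vector $e_j$ to $\nu_j$. By goodness, $\beta$ maps $\Z^d$ onto $\Z^{n+1}$, so it descends to a surjection of tori $\T^d\to\T^{n+1}$ whose kernel $K$ is a subtorus of dimension $d-(n+1)$ (with possibly nontrivial component group controlled by the Smith normal form of $\beta$; goodness condition (ii) is exactly what guarantees $K$ is connected near the relevant strata). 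Let $\fk\subset\R^d$ be its Lie algebra and $\iota_\fk^\ast:\R^d\to\fk^\ast$ the transpose of the inclusion. The $K$-action on the model cone $(\R^{2d}\setminus\{0\},\om_{\rm st},X_{\rm st})$ of Example~\ref{ex:toric-R} has moment map $\iota_\fk^\ast\circ\mu_{\rm st}$, and I would set
\[
W_C := (\iota_\fk^\ast\circ\mu_{\rm st})^{-1}(0)\,/\,K\,,
\]
with $\om_C$ the reduced symplectic form and $X_C$ the vector field induced by $X_{\rm st}$ (which preserves the level set and commutes with $K$ since $\mu_{\rm st}$ is homogeneous of degree $2$). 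The residual $\T^{n+1}=\T^d/K$ action is effective and $X_C$-preserving, and a direct computation identifies its moment cone with $C$: the zero level set of $\iota_\fk^\ast\circ\mu_{\rm st}$ projects under $\mu_{\rm st}$ onto $\ker(\iota_\fk^\ast)\cap(\R_0^+)^d = \mathrm{im}(\beta^\ast)\cap(\R_0^+)^d$, whose image in $\R^{n+1}$ under the identification $\ft^\ast\cong\R^{n+1}$ is precisely $\{x : \langle x,\nu_j\rangle\geq 0\ \forall j\} = C$.

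The main technical point in the existence half is \emph{smoothness} of $W_C$: I must check that $0$ is a regular value of $\iota_\fk^\ast\circ\mu_{\rm st}$ on $\R^{2d}\setminus\{0\}$ and that $K$ acts freely (or at worst with finite stabilizers that are trivial, so the quotient is a manifold rather than an orbifold). This is exactly where the full strength of Definition~\ref{def:gcone}(ii) enters: at a point of the model whose $\mu_{\rm st}$-image lies on a codimension-$k$ face of $C$, precisely $k$ of the coordinates $z_j$ vanish, and the condition that the corresponding normals $\nu_{j_1},\dots,\nu_{j_k}$ extend to a $\Z$-basis of $\Z^{n+1}$ translates into the statement that $K$ acts freely there and the level set is cut out transversally. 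I expect this stratum-by-stratum verification to be the heart of the argument; it is a conical analogue of the standard Delzant construction, and I would organize it by the codimension of the face.

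For uniqueness, suppose $(W,\om,X,\mu)$ is any closed toric symplectic cone with moment cone $C$. I would first show that the moment map image together with the torus orbits recover the manifold: over the interior of $C$ the action is free with Lagrangian orbits, so $\mu^{-1}(\mathrm{int}\,C)\cong \mathrm{int}(C)\times\T^{n+1}$, and over a codimension-$k$ face the appropriate subtorus collapses in a way dictated by the normals. The gluing data across faces is rigid precisely because goodness forces the collapsing subtori to be the standard coordinate subtori in suitable bases. One then checks that the symplectic form and Liouville field are determined up to equivariant diffeomorphism by this combinatorial skeleton — for the symplectic form this is a Moser-type argument using that $H^2$ of the relevant pieces vanishes or is controlled, and for $X$ one uses the homogeneity relation $\mu(\rho_t w)=e^{2t}\rho_t(w)$ together with $\Ll_X\om=2\om$ to pin it down from $\om$ and $\mu$. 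Rather than redo all of this, I would cite Lerman~\cite{Le} (building on Banyaga--Molino~\cite{BnM1,BnM2,Bn} and Boyer--Galicki~\cite{BG}) for the classification, and present the explicit reduction above mainly as the construction of the model $(W_C,\om_C,X_C,\mu_C)$ that will be used throughout the rest of the paper.
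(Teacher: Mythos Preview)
Your approach is essentially the same as the paper's: the theorem is cited from the literature (Banyaga--Molino, Boyer--Galicki, Lerman~\cite{Le}), and only the existence part is spelled out via symplectic reduction of $(\R^{2d}\setminus\{0\},\om_{\rm st},X_{\rm st})$ by the kernel $K$ of the map $\beta:\T^d\to\T^{n+1}$ determined by the normals. Your construction of $W_C$, the role of Definition~\ref{def:gcone}(ii) in ensuring the $K$-action is free stratum by stratum, and the decision to defer uniqueness to~\cite{Le} all match the paper's treatment in subsection~\ref{ss:models}.

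One small inaccuracy to correct: your parenthetical suggests that goodness condition~(ii) makes $K$ connected. In fact the paper explicitly notes that $K$ need \emph{not} be connected (this is what produces the possibly nontrivial finite fundamental group $\pi_1(W_C)\cong\Z^{n+1}/\Nn$ in Proposition~\ref{prop:pi1}). What condition~(ii) actually buys is that the $K$-action on $Z=\phi_K^{-1}(0)\setminus\{0\}$ is \emph{free}, hence the quotient is smooth; you state this correctly a few lines later, so the parenthetical should simply be dropped or rephrased.
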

\begin{defn} \label{def:good}
The closed toric symplectic cones (resp. closed toric contact manifolds)
characterized by Theorem~\ref{thm:gcone} will be called \emph{good} toric 
symplectic cones 
(resp. \emph{good} toric contact manifolds).
\end{defn}
\begin{remark} \label{rmk:good}
According to Lerman's classification (see Theorem 2.18 in~\cite{Le}),
the list of closed toric contact manifolds that are \emph{not good} is
the following:
\begin{itemize}
\item[(i)] certain overtwisted contact structures on $3$-dimensional
lens spaces (including $S^1\times S^2$);
\item[(ii)] the tight contact structures $\xi_n$, $n\geq 1$, on 
$\T^3 = S^1 \times \T^2$, defined as
\[
\xi_n = \ker (\cos(n\theta) dy_1 + \sin(n\theta) dy_2)\,,\ 
(\theta,y_1,y_2)\in S^1 \times \T^2
\]
(Giroux~\cite{Gi} and Kanda~\cite{Ka} proved independently that these
are all inequivalent);
\item[(iii)] a unique toric contact structure on each principal
$\T^{n+1}$-bundle over the sphere $S^n$, with $n\geq 2$.
\end{itemize}
Item (iii) classifies all closed toric contact manifolds of dimension
$2n+1$, $n\geq 2$, and free $\T^{n+1}$-action (\cite{Le}). Hence, a closed toric 
contact manifold of dimension greater than three
is good if and only if the corresponding torus action is not free.
\end{remark}

\begin{example} \label{ex:stdcone}

Let $P\subset\R^n$ be an \emph{integral Delzant polytope}, 
i.e. a Delzant polytope with integral vertices or, equivalently,
the moment polytope of a closed toric symplectic manifold 
$(M_P, \om_P, \mu_P)$ such that $\frac{1}{2\pi}[\omega]\in H^2(M_P, \Z)$.
Then, its \emph{standard cone}
\[
C:= \left\{z(x,1)\in\R^{n}\times\R\,:\ x\in P\,,\ z\geq 0\right\}
\subset\R^{n+1}
\]
is a good cone. Moreover
\begin{itemize}
\item[(i)] the toric symplectic manifold $(M_P, \om_P, \mu_P)$ is the 
$S^1\cong \{{\bf 1}\}\times S^1 \subset \T^{n+1}$ \emph{symplectic reduction} 
of the toric symplectic cone $(W_{C}, \omega_{C}, X_{C}, \mu_{C})$ 
(at level one).
\item[(ii)] $(N_C:=\mu_C^{-1}(\R^{n}\times\{1\}), 
\alpha_C := (\iota(X_C) \omega_C)|_{N_C})$ is the \emph{Boothby-Wang} 
manifold of $(M_P, \om_P)$. The restricted $\T^{n+1}$-action makes it a 
\emph{toric contact manifold}.
\item[(iii)] $(W_C, \omega_C, X_C)$ is the \emph{symplectization} of 
$(N_C, \alpha_C)$.
\end{itemize}
See Lemma 3.7 in~\cite{Le3} for a proof of these facts.

If $P\subset\R^n$ is the standard simplex, i.e. $M_P = \cp^n$, 
then its standard cone $C\subset\R^{n+1}$ is the moment cone of
$(W_C = \C^{n+1}\setminus\{0\}, \om_{\rm st}, X_{\rm st})$
equipped with the $\T^{n+1}$-action given by
\begin{align}
& (y_1,\ldots,y_n,y_{n+1})\cdot (z_1, \ldots, z_n, z_{n+1}) \notag \\
=\ & (e^{i(y_1+y_{n+1})}z_1, \ldots, e^{i(y_n+y_{n+1})}z_n,
e^{iy_{n+1}}z_{n+1})\,.\notag
\end{align}
The moment map $\mu_C : \C^{n+1}\setminus\{0\} \to \R^{n+1}$ is
given by
\[
\mu_{C} (z) = 
\frac{1}{2} (|z_1|^2, \ldots, |z_n|^2, |z_1|^2+\cdots+|z_n|^2
+ |z_{n+1}|^2)
\]
and 
\[
N_C := \mu_C^{-1}(\R^{n}\times\{1\}) = \left\{ z\in\C^{n+1}\,:\ 
\|z\|^2 = 2 \right\} \cong S^{2n+1}\,.
\]
\end{example}

\begin{remark} \label{rem:orbBW}
Up to a possible twist of the action by an automorphism of the torus
$\T^{n+1}$, any good toric symplectic cone can be obtained via an 
orbifold version of the Boothby-Wang construction of 
Example~\ref{ex:boothby-wang}, where the base is a toric symplectic
orbifold. 
\end{remark}

\subsection{Explicit Models}
\label{ss:models}

Like for closed toric symplectic manifolds, the existence part of  
Theorem~\ref{thm:gcone} follows from an explicit symplectic reduction 
construction, starting from a standard 
$(\R^{2d}\setminus\{0\}, \om_{\rm st}, X_{\rm st})$
(cf. Example~\ref{ex:toric-R}). Since it will be
needed later, we will briefly describe it here. Complete details can be
found, for example, in~\cite{Le} (proof of Lemma 6.3).

Let $C\subset(\R^{n+1})^\ast$ be a good cone defined by
\[
C = \bigcap_{j=1}^d \{x\in(\R^{n+1})^\ast\,:\ 
\ell_j (x) := \langle x, \nu_j \rangle \geq 0\}\,
\]
where $d\geq n+1$ is the number of facets and each $\nu_j$ is a primitive 
element of the lattice $\Z^{n+1} \subset \R^{n+1}$ (the inward-pointing 
normal to the $j$-th facet of $C$).

Let $(e_1, \ldots, e_d)$ denote the standard basis of $\R^d$, and define
a linear map $\beta : \R^d \to \R^{n+1}$ by 
\begin{equation} \label{def:beta}
\beta(e_j) = \nu_j\,,\ j=1,\ldots,d\,. 
\end{equation}
The conditions of Definition~\ref{def:gcone} imply that
$\beta$ is surjective. Denoting by $\fk$ its kernel, we have short
exact sequences
\[
0 \to \fk \stackrel{\iota}{\to} \R^d \stackrel{\beta}{\to}
\R^{n+1} \to 0
\ \ \ \mbox{and its dual}\ \ \ 
0 \to (\R^{n+1})^\ast \stackrel{\beta^\ast}{\to} (\R^d)^\ast 
\stackrel{\iota^\ast}{\to}\fk^\ast \to 0\ .
\]
Let $K$ denote the kernel of the map from $\T^d = \R^d/2\pi\Z^d$ to
$\T^{n+1} = \R^{n+1}/2\pi\Z^{n+1}$ induced by $\beta$. More precisely,
\begin{equation} \label{eq:K}
K = \left\{ [y]\in \T^d\,:\ \sum_{j=1}^{d} y_j \nu_j
\in 2\pi\Z^n\right\}\,.
\end{equation}
It is a compact abelian subgroup of $\T^d$ with Lie algebra 
$\fk = \ker (\beta)$. Note that $K$ need not be connected (this will be
relevant in the proof of Proposition~\ref{prop:pi1}).

Consider $\R^{2d}$ with its standard symplectic form
\[
\om_{\rm st} = du\wedge dv = \sum_{j=1}^d du_j\wedge dv_j
\]
and identify $\R^{2d}$ with $\C^d$ via $z_j = u_j + i v_j\,,\ 
j=1,\ldots,d$. The standard action of $\T^d$ on $\R^{2d}\cong
\C^d$ is given by
\[
y \cdot z = \left( e^{i y_1} z_1, \ldots, e^{i y_d} z_d\right)
\]
and has a moment map given by
\[
\phi_{\T^d} (z_1,\ldots,z_d) = \sum_{j=1}^d \frac{|z_j|^2}{2}\, e_j^\ast 
\in (\R^d)^\ast\,.
\]
Since $K$ is a subgroup of $\T^d$, $K$ acts on $\C^d$ with moment map
\begin{equation}\label{def:phiK}
\phi_K = \iota^\ast \circ \phi_{\T^d} =
\sum_{j=1}^d \frac{|z_j|^2}{2} \iota^\ast(e_j^\ast)\in \fk^\ast\ .
\end{equation}

The toric symplectic cone $(W_C,\om_C, X_C)$ associated to the good
cone $C$ is the symplectic reduction of 
$(\R^{2d}\setminus\{0\}, \om_{\rm st}, X_{\rm st})$ with respect to the
$K$-action, i.e.
\[
W_C = Z / K\ \ \mbox{where}\ \ Z=\phi_K^{-1}(0) \setminus\{0\}
\equiv\ \mbox{zero level set of moment map in $\R^{2d}\setminus\{0\}$,}
\]
the symplectic form $\om_C$ comes from $\om_{\rm st}$ via symplectic 
reduction, while the $\R$-action of the Liouville vector field $X_C$ and 
the action of $\T^{n+1} \cong \T^d/K$ are induced by the actions of 
$X_{\rm st}$ and $\T^d$ on $Z$.

\subsection{Fundamental group and first Chern class}

Lerman showed in~\cite{Le2} how to compute the fundamental group of
a good toric symplectic cone, which is canonically isomorphic to the
fundamental group of the associated good toric contact manifold.

\begin{prop} \label{prop:pi1} (\cite{Le2})
Let $W_C$ be the good toric symplectic cone determined by a good
cone $C\subset\R^{n+1}$. Let $\Nn := \Nn\{\nu_1, \ldots, \nu_d\}$
denote the sublattice of $\Z^{n+1}$ generated by the primitive integral 
normal vectors to the facets of $C$. The fundamental group of $W_C$ 
is the finite abelian group
\[
\Z^{n+1}/\Nn\,.
\]
\end{prop}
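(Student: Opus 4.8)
The plan is to compute $\pi_1(W_C)$ directly from the explicit model $W_C = Z/K$ described in Section~\ref{ss:models}, where $Z = \phi_K^{-1}(0)\setminus\{0\} \subset \R^{2d}\setminus\{0\} \cong \C^d\setminus\{0\}$ and $K$ is the compact abelian subgroup of $\T^d$ defined in~\eqref{eq:K}. The first step is to argue that $Z$ is simply connected. This should follow from the goodness conditions of Definition~\ref{def:gcone}: in the standard $\C^d$ picture, the set $Z$ is the locus where the $K$-moment map vanishes, and goodness guarantees that along the boundary strata of the moment cone the relevant coordinate subspaces have complex codimension at least two in $\C^d$ (the codimension-$k$ faces correspond to exactly $k$ of the $z_j$ vanishing, with $k \leq n$, so at most $n$ coordinates vanish simultaneously while $d \geq n+1$). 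Hence the complement $Z$, being $\C^d$ minus a union of subsets of real codimension $\geq 4$ after the symplectic-reduction slicing, is connected and simply connected; one makes this precise by checking that $Z$ is a codimension-$0$ submanifold-with-corners of $\C^d\setminus\{0\}$ whose complement in $\C^d$ has real codimension at least two, and invoking transversality/general position to kill $\pi_1$.

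The second step is the covering-space argument. Since $K$ acts freely on $Z$ (this is part of the content of Theorem~\ref{thm:gcone}; the quotient $W_C = Z/K$ is a smooth symplectic cone), the projection $Z \to W_C$ is a principal $K$-bundle. Because $Z$ is simply connected, the long exact homotopy sequence of this fibration gives $\pi_1(W_C) \cong \pi_0(K)$, the group of connected components of $K$. So the whole computation reduces to identifying $\pi_0(K)$.

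The third step is the purely lattice-theoretic identification $\pi_0(K) \cong \Z^{n+1}/\Nn$. The identity component $K_0$ of $K$ is the subtorus of $\T^d$ with Lie algebra $\fk = \ker\beta$, i.e. $K_0 = (\fk \cap \Z^d\text{-span})$ exponentiated; concretely $K_0 = \ker(\T^d \to \T^{n+1})$ where the map is $\beta$ followed by exponentiation, \emph{but only after replacing the target torus $\R^{n+1}/2\pi\Z^{n+1}$ by $\R^{n+1}/2\pi\Nn$}. From the description~\eqref{eq:K}, $[y] \in K$ iff $\sum_j y_j \nu_j \in 2\pi\Z^{n+1}$; the image of the homomorphism $K \to \Z^{n+1}$, $[y] \mapsto \tfrac{1}{2\pi}\sum_j y_j\nu_j$ is well-defined only modulo $\Nn$ (changing representatives $y_j \mapsto y_j + 2\pi$ shifts the sum by $2\pi\nu_j \in 2\pi\Nn$), so one gets a well-defined surjection $K \to \Z^{n+1}/\Nn$. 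Its kernel is exactly $K_0$ (those classes for which the sum already lies in $2\pi\Nn$, which is the condition cutting out the connected subtorus). Surjectivity is immediate since $\beta$ is surjective over $\R$, so any element of $\Z^{n+1}$ is an integer combination of the $\nu_j$, hence realized by an appropriate $[y]$. Therefore $\pi_0(K) = K/K_0 \cong \Z^{n+1}/\Nn$, which is finite because $\Nn$ has full rank $n+1$ in $\Z^{n+1}$ (the $\nu_j$ span $\R^{n+1}$), and abelian as a quotient of $\Z^{n+1}$.

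The main obstacle I anticipate is the first step, establishing that $Z$ is simply connected. One must be careful that $Z$ is not simply $\C^d$ minus a linear subspace: it is a level set of the $K$-moment map with the origin removed, a manifold with corners whose topology is controlled by which coordinate hyperplanes it meets. The clean way to handle this is to use the fact (from Lerman's construction, which we are entitled to cite via Theorem~\ref{thm:gcone}) that $Z$ deformation retracts onto, or is at least homotopy equivalent via the moment-map/gradient flow to, a model built from the face structure of the good cone $C$, together with the codimension bound coming from condition~(ii) of Definition~\ref{def:gcone} — at most $n$ of the $d \geq n+1$ coordinates can vanish at once, giving real codimension $\geq 2(d-n) \geq 2$ for the relevant subsets, in fact enough to conclude $Z$ is $1$-connected. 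Since this is exactly Lerman's argument in~\cite{Le2}, it suffices to reference it; the present proof's contribution is to package the covering-space and lattice computations cleanly.
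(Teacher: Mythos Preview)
Your approach matches the paper's outline step for step: $W_C = Z/K$; show $Z$ is simply connected; use the long exact sequence of the fibration $K \to Z \to W_C$ to get $\pi_1(W_C)\cong\pi_0(K)$; and identify $\pi_0(K)\cong\Z^{n+1}/\Nn$ from the definition of $K=\ker\beta$.

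Two corrections are worth making, though neither breaks the argument. In your first step, $Z$ is \emph{not} a codimension-$0$ submanifold-with-corners of $\C^d\setminus\{0\}$: it is the zero level set of the $K$-moment map, hence a smooth submanifold of real codimension $d-n-1$. The paper's formulation (following Lerman) is cleaner than your codimension heuristics: $Z$ has the homotopy type of $\C^d\setminus(V_1\cup\cdots\cup V_r)$, where each $V_j\subset\C^d$ is a linear subspace of complex codimension at least $2$, and this gives $\pi_0(Z)=\pi_1(Z)=\pi_2(Z)=1$ directly. In your third step, the phrase ``any element of $\Z^{n+1}$ is an integer combination of the $\nu_j$'' is precisely what need \emph{not} hold (that would say $\Nn=\Z^{n+1}$ and the quotient is trivial); surjectivity of $K\to\Z^{n+1}/\Nn$ requires only the surjectivity of $\beta$ over $\R$, which you had already invoked.
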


\begin{proof}(Outline)
\begin{itemize}
\item[(i)] We know that
\[
W_C = Z / K\,,
\]
where $K\subset\T^d$ acts on $\C^d$ with moment map
$\phi_K : \C^d \to \fk^\ast$ defined by~(\ref{def:phiK})
and $Z = \phi_K^{-1} (0)\setminus\{0\}$.
\item[(ii)] The set $Z$ has the homotopy type of
\[
\C^d \setminus (V_1 \cup \cdots \cup V_r)\,,
\]
where each $V_j \subset \C^d$ is a linear subspace of complex
codimension at least $2$. In particular,
\[
\pi_0 (Z) = \pi_1 (Z) = \pi_2 (Z) = 1\,.
\]
\item[(iii)] $K$ acts freely on $Z$ and the long
exact sequence of homotopy groups for the fibration
\[
K \to Z \to W_C
\]
implies that
\[
\pi_1 (W_C) = \pi_0 (K)\,.
\]
\item[(iv)] The fact that $K = \ker \beta$, with $\beta:\T^d \to \T^{n+1}$
defined by~(\ref{def:beta}), implies that
\[
\pi_0 (K) = \Z^{n+1} / \Nn \,.
\]
\end{itemize}
\end{proof}

Recall from Remark~\ref{rmk:chern-class} that the Chern classes of the 
tangent bundle of a symplectic cone can be canonically identified with
the Chern classes of the associated co-oriented contact manifold.
The following proposition gives a combinatorial characterization of
the vanishing of the first Chern class of good toric symplectic cones.

\begin{prop} \label{prop:c_1}
Let $(W_C, \omega_C,X_C)$ be the good toric symplectic cone determined
by the good cone $C\in\R^{n+1}$ via the explicit symplectic reduction
construction of the previous subsection. Let $K\in\T^d$ be defined
by~(\ref{eq:K}) and denote by $\chi_1, \ldots, \chi_d$ the characters
that determine its natural representation on $\C^d$. Then
\[
c_1 (TW_C) = 0 \ \Leftrightarrow\ \chi_1 + \ldots + \chi_d = 0\,.
\]
\end{prop}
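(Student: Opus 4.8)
The plan is to produce a $K$-equivariant Euler-type decomposition over $Z = \phi_K^{-1}(0)\setminus\{0\}$ relating the trivial bundle $Z\times\C^d$ to the tangent bundle of the reduction $W_C = Z/K$, and then to identify $H^2(W_C;\Z)$ with the character group $\hat K$ in such a way that each $\chi_j$ corresponds to $c_1$ of its associated line bundle.

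First recall, from the construction in Section~\ref{ss:models} and the outline of Proposition~\ref{prop:pi1}, that $K$ acts freely on $Z$, that $W_C = Z/K$, and that $Z$ has the homotopy type of the complement in $\C^d$ of finitely many complex linear subspaces of codimension $\geq 2$; in particular $Z$ is $2$-connected, so $H^1(Z;\Z) = H^2(Z;\Z) = 0$. Next, at each $z\in Z$ the complexified infinitesimal action $\fk\otimes\C \to \C^d$, $\zeta + i\zeta' \mapsto \zeta_{\C^d}(z) + J\,\zeta'_{\C^d}(z)$, is injective (freeness) with image a complex subspace $\underline{\fk}^{\C}_z \subset \C^d$; a short computation shows $\underline{\fk}_z \cap J\underline{\fk}_z = 0$, and the Hermitian-orthogonal complement of $\underline{\fk}^{\C}_z$ is precisely the horizontal subspace of the symplectic reduction, hence canonically isomorphic, as a complex vector space, to $T_{[z]}W_C$. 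Since $K$ is abelian everything here is $K$-equivariant, with $K$ acting trivially on $\fk\otimes\C$, so over $Z$ one obtains a $K$-equivariant splitting of complex vector bundles $Z\times\C^d \cong \underline{\fk\otimes\C}\oplus H$ in which $H$ descends to a bundle over $W_C$ isomorphic to $TW_C$. Writing $L_\chi := Z\times_K\C_\chi$ for the line bundle over $W_C$ associated to the one-dimensional $K$-representation $\C_\chi$ with character $\chi$, and descending the splitting, this reads
\[
TW_C \oplus \underline{\fk\otimes\C}\ \cong\ \bigoplus_{j=1}^{d} L_{\chi_j}\qquad\text{over }W_C,
\]
so that $c_1(TW_C) = \sum_{j=1}^{d} c_1(L_{\chi_j})$. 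This already gives one direction: if $\chi_1 + \cdots + \chi_d = 0$ then $\bigoplus_j L_{\chi_j}$ has trivial determinant line bundle, so $c_1(TW_C) = 0$.

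For the converse I would identify $H^2(W_C;\Z)$ with $\hat K$. Since $K$ acts freely, $W_C$ is homotopy equivalent to $EK\times_K Z$ and $H^\ast(W_C;\Z)\cong H^\ast_K(Z;\Z)$; feeding $H^1(Z) = H^2(Z) = 0$ into the Serre spectral sequence of the Borel fibration $Z\to EK\times_K Z\to BK$ shows that in total degree $2$ only the base contributes, so the edge homomorphism $H^2(BK;\Z)\to H^2_K(Z;\Z)$ is an isomorphism (the possible disconnectedness of $K$, which makes $BK$ non-simply connected, is harmless here since the only local systems that enter, $H^1(Z)$ and $H^2(Z)$, vanish). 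Composing with the standard isomorphism $\hat K\cong H^2(BK;\Z)$, $\chi\mapsto c_1$ of the associated line bundle, one gets an isomorphism $\hat K\xrightarrow{\ \sim\ }H^2(W_C;\Z)$ carrying each $\chi_j$ to $c_1(L_{\chi_j})$. Hence $c_1(TW_C) = \sum_j c_1(L_{\chi_j})$ is the image of $\chi_1 + \cdots + \chi_d$ under an isomorphism, and it vanishes if and only if $\chi_1 + \cdots + \chi_d = 0$.

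The formal part (the spectral-sequence bookkeeping, and the passage between $K$-equivariant bundles on $Z$ and bundles on $W_C$) is routine; the only thing requiring genuine care is the second step — checking that the image of the complexified infinitesimal action and the reduced tangent space are honest complex $K$-subbundles of $Z\times\C^d$, not merely real ones, and that the line bundle associated to $\chi$ indeed has equivariant first Chern class $\chi$. I do not anticipate any serious obstacle beyond these verifications.
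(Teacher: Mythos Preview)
Your proof is correct and follows essentially the same strategy as the paper: both establish the stable isomorphism $TW_C \oplus (\fk_\C\text{-trivial}) \cong Z\times_K\C^d = \bigoplus_j L_{\chi_j}$ via the infinitesimal $K$-action, and both reduce the equivalence to showing that $H^2(BK;\Z)\to H^2(W_C;\Z)$ is an isomorphism. The only cosmetic difference is that the paper obtains this isomorphism by showing the classifying map $f:W_C\to BK$ is $3$-connected (from $\pi_0(Z)=\pi_1(Z)=\pi_2(Z)=1$), whereas you run the Serre spectral sequence of the Borel fibration---the same vanishing input, repackaged.
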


\begin{proof}
It follows from the symplectic reduction description of $W_C$ as
$Z/K$, where $K\subset\T^d$ acts freely on $Z\subset\C^d$, that the
quotient map $Z\to W_C$ is a principal $K$-bundle and we have
the following classifying diagram:
\[ 
\xymatrix{  
Z  \ar[r] \ar[d]  & EK  \ar[d] \\ 
W_C \ar[r]^{f}  & BK  
} 
\]
Consider the induced map between the homotopy long exact sequences of these
two principal fibrations with the same fiber $K$. Note that $EK$ is contractible.
As we pointed out in the proof of the previous proposition, 
Lerman showed in~\cite{Le2} that $\pi_0 (Z) = \pi_1 (Z) = \pi_2 (Z) = 1$.
This implies that
\[
f_\ast : \pi_i (W_C) \longrightarrow \pi_i (BK) \ 
\text{is an isomorphism for $i=0, 1, 2$.}
\]
Since $\pi_3 (BK) \cong \pi_2 (K)  = 1$, we also know that
\[
f_\ast : \pi_3 (W_C) \longrightarrow \pi_3 (BK) \ 
\text{is surjective.}
\]
This means that the map $f: W_C \to BK$ is $3$-connected and so induces an isomorphism
in homology, and also in cohomology, in degree $\leq 2$. In particular,
\[
f^\ast : H^2 (BK;\Z) \longrightarrow H^2 (W_C; \Z) \ 
\text{is an isomorphism.}
\]

The natural representation of $K\subset \T^d$ on $\C^d$ and this principal $K$-bundle
$Z \to W_C$ give rise to a vector bundle $Z \times_K \C^d \to W_C$ with the following
classifying diagram:
\[ 
\xymatrix{  
Z \times_K \C^d \ar[r] \ar[d]  & EK \times_K \C^d \ar[d] \\ 
W_C \ar[r]^{f}  & BK  
} 
\]
One can also think of this vector bundle as the quotient by $K$ of the trivial $K$-equivariant
vector bundle $Z\times \C^d \to Z$ that one gets by restricting the tangent bundle of $\C^d$ to
$Z$. Let  $\fk_\C$ denote the complexified Lie algebra of $K$. The trivial vector bundle
$W_C \times \fk_\C  \to W_C$ can be seen as a sub-bundle of $Z \times_K \C^d \to W_C$
via the map
\begin{align}
W_C \times \fk_\C & \longrightarrow Z \times_K \C^d \notag \\
([z], v ) \  & \longmapsto\  [z, X_v] \notag
\end{align}
where we use the description of $W_C$ as $Z/K$ and $X_v \in T_z \C^d \cong \C^d$ is induced
by the free action of $K$ on $Z$. The quotient bundle $(Z \times_K \C^d)/ (W_C \times \fk_\C)$ is naturally
isomorphic to $T W_C$ and this shows that
\[
Z \times_K \C^d \cong TW_C \oplus (W_C \times \fk_\C)\,.
\]
Hence
\[
c_1 (TW_C) = c_1 (Z \times_K \C^d) = f^\ast c_1 (EK \times_K \C^d)
\quad\text{and}\quad
c_1 (TW_C) = 0 \ \Leftrightarrow\ c_1 (EK \times_K \C^d) = 0\,.
\]
Since
\[
H^2 (BK;\Z) \cong \text{character group of $K$}
\]
and $c_1 (EK \times_K \C^d) \in H^2 (BK;\Z)$ is given by
\[
c_1 (EK \times_K \C^d) = \chi_1 + \ldots + \chi_d\,,
\]
the result follows.
\end{proof}
\begin{remark} \label{rmk:c_1}
Let $k_1, \ldots, k_{d-n-1} \in \Z^d \subset\R^d$ be an integral basis
for the Lie algebra of $K\subset\T^d$. Proposition~\ref{prop:c_1}
states that
\[
c_1 (TW_C) = 0 \ \Leftrightarrow\ \sum_{j=1}^d (k_i)_j = 0 \,,\ 
\forall \, i=1, \ldots,d-n-1\,.
\]
\end{remark}

\subsection{Sasaki contact forms and Reeb vectors}

Let $(W,\omega, X)$ be a good toric symplectic cone of dimension $2(n+1)$, with
corresponding closed toric manifold $(N,\xi)$. Denote by $\Xx_X (W, \omega)$
the set of $X$-preserving symplectic vector fields on $W$ and by $\Xx (N,\xi)$
the corresponding set of contact vector fields on $N$. The $\T^{n+1}$-action
associates to every vector $\nu \in \ft \cong \R^{n+1}$ a vector field
$R_\nu \in \Xx_X (W,\omega) \cong \Xx (N, \xi)$.

\begin{defn} \label{def:sasaki}
A contact form $\alpha \in \Omega^1 (N,\xi)$ is called \emph{Sasaki} if
its Reeb vector field $R_\alpha$ satisfies
\[
R_\alpha = R_\nu \quad\text{for some $\nu\in\R^{n+1}$.}
\]
In this case we will say that $\nu\in\R^{n+1}$ is a \emph{Reeb vector}.
\end{defn}

In the context of their work on toric Sasaki geometry, Martelli-Sparks-Yau
characterize in~\cite{MSY} which $\nu\in\R^{n+1}$ are Reeb vectors of a
Sasaki contact form on $(N,\xi)$.

\begin{prop} \label{prop:sasaki} (\cite{MSY} )
Let $\nu_1, \ldots, \nu_d \in \R^{n+1}$ be the defining integral normals
of the moment cone $C\in\R^{n+1}$ associated with $(W,\omega,X)$ and 
$(N,\xi)$. The vector field $R_\nu \in \Xx_X (W,\omega) \cong \Xx(N,\xi)$
is the Reeb vector field of a Sasaki contact form 
$\alpha_\nu \in \Omega^1 (N,\xi)$ if and only if
\[
\nu = \sum_{j=1}^d a_j \nu_j \quad\text{with $a_j\in\R^+$ for all
$j=1, \ldots, d$.}
\]
\end{prop}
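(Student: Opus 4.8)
The plan is to work entirely at the level of the symplectic cone and use the explicit symplectic reduction description of $(W,\om,X)$ from Subsection~\ref{ss:models}. Write $W = Z/K$ with $Z = \phi_K^{-1}(0)\setminus\{0\} \subset \C^d$, and recall that the torus $\T^{n+1} \cong \T^d/K$ acts on $W$ with moment map $\mu$ induced from the standard moment map $\phi_{\T^d}(z) = \sum_j \tfrac{|z_j|^2}{2} e_j^\ast$ on $\C^d$. A vector $\nu \in \ft \cong \R^{n+1}$ lifts (non-canonically) to a vector $\hat\nu \in \R^d$ with $\beta(\hat\nu) = \nu$, and then $R_\nu$ is the descent to $W$ of the linear vector field on $\C^d$ generating the $\T^d$-action in the direction $\hat\nu$. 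The first step is to observe that this descended vector field is the Reeb vector field of a \emph{genuine} contact form on $N$ (equivalently, of a section $s:N\to W$, equivalently of an $\R$-equivariant splitting $W\cong N\times\R$, via the three-fold equivalence recalled after the symplectization discussion) precisely when it is \emph{everywhere transverse to the characteristic cone}, i.e. when the function $\langle \mu, \nu\rangle = \om(X, R_\nu)$ is strictly positive on $W$. Indeed, if $\langle \mu,\nu\rangle > 0$ everywhere, then $N_\nu := \{\langle \mu,\nu\rangle = 1\}$ is a global hypersurface transverse to $X$, hence a section, and $\alpha_\nu := (\iota(X)\om)|_{N_\nu}$ is a contact form whose Reeb field is $R_\nu$ (using $\iota(R_\nu)\om = -d\langle\mu,\nu\rangle$ together with $[R_\nu,X]=0$, as in Remark following the Reeb-field definition). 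Conversely, if $\langle\mu,\nu\rangle$ vanishes or changes sign somewhere, no such hypersurface exists and $R_\nu$ cannot be a Reeb field of a contact form for $\xi$.

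The second step is the purely convex-geometric translation of the positivity condition. Pulling back to $\C^d$, the function $\langle\mu,\nu\rangle$ becomes $\langle \phi_{\T^d}(z), \hat\nu\rangle = \sum_{j=1}^d \hat\nu_j \tfrac{|z_j|^2}{2}$ restricted to $Z$. Now $\mu(W) = C^\circ$, the relative interior of the moment cone, so $\langle\mu,\nu\rangle > 0$ on $W$ is equivalent to $\langle x,\nu\rangle > 0$ for all $x$ in the interior of $C$, i.e.\ to $\nu$ lying in the interior of the dual cone $C^\vee$. Since $C = \bigcap_{j=1}^d\{x : \langle x,\nu_j\rangle \ge 0\}$ and the $\nu_j$ are (by goodness, in particular surjectivity of $\beta$) a spanning set containing a basis, standard duality for polyhedral cones gives $C^\vee = \operatorname{cone}(\nu_1,\dots,\nu_d)$, and its interior is exactly $\{\sum_j a_j\nu_j : a_j \in \R^+\}$. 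Combining the two steps yields the claimed equivalence.

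The main obstacle is the clean identification of the image $\mu(W)$ with the open cone $C^\circ$ and, more delicately, making sure the positivity statement is genuinely an \emph{open} condition matching the \emph{strict} inequalities $a_j > 0$ rather than $a_j \ge 0$. The subtlety is at the boundary: for $\nu$ on a facet of $C^\vee$ (some $a_j = 0$ but $\nu$ still in $C^\vee$), $\langle\mu,\nu\rangle$ is nonnegative but vanishes on the corresponding face of $W$, where $R_\nu$ is tangent to the symplectization direction and no contact form has it as Reeb field — so these boundary rays must be excluded, which is exactly what the strict positivity $a_j > 0$ encodes. I would handle this by working with the explicit coordinate description on $\C^d$: the face of $C$ dual to $\nu_j$ is $\mu$ of the locus $\{z_j = 0\}\cap Z$, and there $\sum_k \hat\nu_k|z_k|^2/2$ is a nonnegative combination missing the $|z_j|^2$ term, so it can vanish iff the remaining $\hat\nu_k$ (for $k$ indexing the complementary facets through a vertex) are nonnegative and not all the corresponding coordinates forced positive — a finite check over the vertices of $C$, using condition (ii) of goodness to control which coordinate subspaces actually meet $Z$. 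One technical point worth flagging: the lift $\hat\nu$ is only defined modulo $\fk = \ker\beta$, but $\langle\phi_{\T^d}(z),\hat\nu\rangle$ restricted to $Z = \phi_K^{-1}(0)$ is independent of this choice since $\phi_K = \iota^\ast\circ\phi_{\T^d}$ vanishes on $Z$, so everything descends well to $W$ as required.
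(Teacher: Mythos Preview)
Your approach and the paper's outline are essentially the same: both pass through the reduction $W=Z/K$ from $\C^d$ and identify the Reeb condition with the positivity of the level-set defining function, which in the $\C^d$ coordinates is $\sum_j a_j|z_j|^2$. The paper simply declares the $\C^d$ case ``well-known'' (the hypersurface $\{\sum a_j|z_j|^2=1\}\cong S^{2d-1}$ exactly when all $a_j>0$) and then invokes $\beta$; you give the intrinsic version via $\omega(X,R_\nu)=\langle\mu,\nu\rangle>0$ and polyhedral duality, which is a cleaner account of the same mechanism.

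One slip to fix: $\mu(W)$ is $C\setminus\{0\}$, not the relative interior $C^\circ$ --- the boundary facets of $C$ are hit by the loci $\{z_j=0\}\cap Z$ (this is visible already in Example~\ref{ex:toric-R}). As written, your Step~2 inference ``$\langle x,\nu\rangle>0$ for all $x\in C^\circ$ $\Leftrightarrow$ $\nu\in\operatorname{int}C^\vee$'' is false: the left-hand side only gives $\nu\in C^\vee\setminus\{0\}$, since any nonzero $\nu\in C^\vee$ already pairs strictly positively with interior points. The strictness $a_j>0$ comes precisely from demanding positivity on the \emph{boundary} faces of $C\setminus\{0\}$ as well, which is exactly the content of your third paragraph. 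So the overall logic is sound, but Step~2 should be rewritten with $\mu(W)=C\setminus\{0\}$; then your boundary discussion becomes the actual argument rather than a supplementary obstacle.
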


\begin{proof} (Outline)

This result is well-known for $(\C^d\setminus\{0\}, \om_{\rm st}, X_{\rm st})$.
In fact, any such Reeb vector field $R_\nu$ corresponds to
\[
\nu = \sum_{j=1}^d a_j e_j = (a_1, \ldots, a_d) \in (\R^+)^d
\]
and can be written in complex coordinates as
\[
R_\nu = i \sum_{j=1}^d a_j (z_j \frac{\partial}{\partial z_j} - 
\oz_j \frac{\partial}{\partial \oz_j})\,.
\]
The corresponding Reeb flow is given by
\[
\left(R_\nu\right)_s \cdot (z_1, \ldots, z_d) =
(e^{i a_1 s} z_1, \ldots, e^{i a_d s} z_d)
\]
and the contact form $\alpha_\nu$ is the restriction of
\[
\alpha_{\rm st} := \iota(X_{\rm st}) \om_{\rm st} = 
\frac{i}{2} \sum_{j=1}^d (z_j d \oz_j - \oz_j d z_j)
\]
to
\begin{align}
S^{2d-1} & \cong \{z\in\C^d\,:\ (\alpha_{\rm st})_z (R_\nu) = 1 \} \notag \\
& = \{z\in\C^d\,:\ \sum_{j=1}^d a_j |z_j|^2 = 1 \}\,. \notag
\end{align}
(Compare with the example in the Introduction and Example~\ref{ex:R}.)

The result follows for any good toric symplectic cone $(W, \omega, X)$, with
moment cone $C$, from the explicit reduction construction of the model
$(W_C, \omega_C, X_C)$. Note in particular the definition of the linear
map $\beta:\R^d \to \R^{n+1}$ given by~(\ref{def:beta}).
\end{proof}

\section{The Conley-Zehnder index}
\label{index}

\subsection{The Maslov index for loops of symplectic matrices}

Let $Sp(2n)$ denote the symplectic linear group, i.e. the group of linear
transformations of $\R^{2n}$ that preserve its standard linear symplectic
form. The Maslov index provides an explicit isomorphism 
$\pi_1 (Sp(2n))\cong \Z$. It assigns an integer $\mu_M (\varphi)$ to every loop
$\varphi:S^1= \R/2\pi\Z \to Sp(2n)$, uniquely characterized by the following properties:
\begin{itemize}
\item Homotopy: two loops in $Sp(2n)$ are homotopic iff they have the same
Maslov index.
\item Product: for any two loops $\varphi_1, \varphi_2 : S^1 \to Sp(2n)$ we
have
\[
\mu_M (\varphi_1 \cdot \varphi_2) = \mu_M (\varphi_1) + \mu_M (\varphi_2)\,.
\]
In particular, the constant identity loop has Maslov index zero.
\item Direct Sum: if $n=n_1 + n_2$, we may regard $Sp(2n_1)\oplus Sp(2n_2)$ as 
a subgroup of $Sp(2n)$ and
\[
\mu_M (\varphi_1 \oplus \varphi_2) = \mu_M (\varphi_1) + \mu_M (\varphi_2)\,.
\]
\item Normalization: the loop $\varphi : S^1 \to U(1) \subset Sp(2)$ defined
by $\varphi (\theta) = e^{i\theta}$ has Maslov index one.
\end{itemize}

\subsection{The Conley-Zehnder index for paths of symplectic matrices}

Robin and Salamon~\cite{RS} defined a Conley-Zehnder index which assigns
a half-integer $\mu_{CZ} (\Gamma)$ to any path of symplectic matrices
$\Gamma:[a,b]\to Sp(2n)$. This Conley-Zehnder index satisfies the following properties:
\begin{itemize}
\item[1)] Naturality: $\mu_{CZ} (\Gamma) = \mu_{CZ} (\psi \Gamma \psi^{-1})$
for all $\psi\in Sp(2n)$.
\item[2)] Homotopy: $\mu_{CZ} (\Gamma)$ is invariant under homotopies of
$\Gamma$ with fixed endpoints.
\item[3)] Zero: if $\Gamma(a)$ is the identity matrix and $\Gamma (t)$ has
no eigenvalue on the unit circle for $t\in (a,b]$, then $\mu_{CZ}(\Gamma) = 0$.
\item[4)] Direct Sum: if $n=n_1 + n_2$, we may regard $Sp(2n_1)\oplus Sp(2n_2)$ as 
a subgroup of $Sp(2n)$ and
\[
\mu_{CZ} (\Gamma_1 \oplus \Gamma_2) = \mu_{CZ} (\Gamma_1) + \mu_{CZ} (\Gamma_2)\,.
\]
\item[5)] Loop: if $\varphi:[a,b]\to Sp(2n)$ is a loop with $\varphi(a) = \varphi(b) =$
identity matrix, then
\[
\mu_{CZ} (\varphi \cdot \Gamma) = 2 \mu_M (\varphi) + \mu_{CZ} (\Gamma)\,.
\]
\item[6)] Concatenation: for any $a<c<b$ we have
\[
\mu_{CZ} (\Gamma) = \mu_{CZ}(\Gamma|_{[a,c]}) + \mu_{CZ}(\Gamma|_{[c,b]})\,.
\]
\item[7)] Signature: given a symmetric $(2n \times 2n)$-matrix $S$ with
$\|S\| < 1$, the Conley-Zehnder index of the path $\Gamma:[0,1]\to Sp(2n)$
defined by $\Gamma (t) = \exp (2\pi J_0 S t)$ is given by
\[
\mu_{CZ} (\Gamma) = \frac{1}{2} \sign{S}\,.
\]
Here $\|S\|:= \max_{|v|=1} |Sv|$, using the standard Euclidean norm on $\R^{2n}$,
$\sign(S) :=$ signature of the matrix $S$, i.e. the number of positive minus the
number of negative eigenvalues, and $J_0$ is the matrix representing the standard
complex structure on $\R^{2n}$, i.e
\[
J_0 = 
\begin{bmatrix}
0 & -I \\
I & 0
\end{bmatrix}\,.
\]
\item[8)] Shear axiom: the index of a symplectic shear
\[
\Gamma(t) = \left(\begin{matrix}
I & B(t) \\
0 & I \\
\end{matrix}\right)\,
\]
is given by $\frac 12\sign{B(a)} - \frac 12\sign{B(b)}$.
\end{itemize}

\begin{example} \label{ex:CZ1}
If $\Gamma:[a,b]\to Sp(2n)$ is a loop then
\[
\mu_{CZ} (\Gamma) = 2 \mu_M (\Gamma)\,.
\]
\end{example}

\begin{example} \label{ex:CZ2}
Let $T>0$ and $\Gamma : [0,T]\to U(1) \subset Sp(2)$ be defined by
\[
\Gamma (t) = e^{2\pi i t} = 
\begin{bmatrix}
\cos (2\pi t) & -\sin (2\pi t) \\
\sin (2\pi t) & \cos (2\pi t)
\end{bmatrix} \,,
\]
Then
\[
\mu_{CZ} (\Gamma) = 
\begin{cases}
2T &\text{if $T\in\N$;} \\
2 \lfloor T \rfloor +1  &\text{otherwise.}
\end{cases}\,, 
\]
where $\lfloor T \rfloor := \max\{n \in \Z;\, n \leq T\}$. 
\end{example}

\subsection{The Conley-Zehnder index for contractible periodic Reeb orbits}

We will now define the Conley-Zehnder index of a periodic Reeb orbit which, 
for the sake of simplicity, we will assume to be contractible.

Let $(N^{2n+1},\xi)$ be a co-oriented contact manifold, with 
contact form $\alpha$ and Reeb vector field $R_\alpha$.
Given a contractible periodic Reeb orbit $\gamma$, consider a 
\emph{capping disk} of $\gamma$, that is a map $\sigma_\gamma: D \to N$ that satisfies
\[
\sigma_\gamma|_{\partial D} = \gamma. 
\] 
Choose a (unique up to homotopy) symplectic trivialization
\[
\Phi: \sigma_\gamma^*\xi \to D \times \R^{2n} \,.
\]
We can define the symplectic path
\begin{equation}
\label{spath}
\Gamma(t) = \Phi(\gamma(t)) \circ d(R_\alpha)_t(\gamma(0))|_\xi \circ 
\Phi^{-1}(\gamma(0)).
\end{equation}
The Conley-Zehnder index of $\gamma$ with respect to the capping disk
$\sigma_\gamma$ is defined by
\[
\mu_{CZ}(\gamma,\sigma_\gamma) = \mu_{CZ}(\Gamma)\,.
\]
This is in general a half integer number and it is an integer number if the periodic 
orbit is nondegenerate. This means that the linearized Poincar\'e map of 
$\gamma$ has no eigenvalue equal to one.

This index in general does depend on the choice of the capping disk. 
More precisely, given another capping disk $\bar\sigma_\gamma$, we
have that
\[
\mu_{CZ}(\gamma,\bar\sigma_\gamma) - \mu_{CZ}(\gamma,\sigma_\gamma) = 
2\langle c_1(\xi), \bar\sigma_\gamma \# (-\sigma_\gamma)\rangle\,,
\]
where $ \bar\sigma_\gamma \# (-\sigma_\gamma)$ denotes the homology class of the gluing of the capping disks $\bar\sigma_\gamma$ and $\sigma_\gamma$ with the reversed orientation. Notice however that the parity of the index of a nondegenerate closed orbit 
does not depend on the chosen capping disk. In particular, the index of a 
contractible nondegenerate periodic orbit is a well defined element in 
$\Z/2c(\xi)\Z$, where
\[
c(\xi) := \inf \{k>0; \exists A \in \pi_2(N), \langle c_1(\xi),A\rangle = k\}
\]
is the {\it minimal Chern number} of $\xi$ (here we adopt the convention that 
the infimum over the empty set equals $\infty$).

\begin{rem}
We can define the Conley-Zehnder index of a contractible periodic orbit $\gamma$ of a Hamiltonian 
flow on a symplectic manifold $V$ in the same way, taking a capping disk $\sigma_\gamma$ and a trivialization of $TV$ over $\sigma_\gamma$. Analogously to periodic orbits of Reeb flows, the difference of the indexes with respect to two capping disks $\bar\sigma_\gamma$ and $\sigma_\gamma$  is given by
\begin{equation}
\label{changecap}
\mu_{CZ}(\gamma,\bar\sigma_\gamma) - \mu_{CZ}(\gamma,\sigma_\gamma) = 
2\langle c_1(TV), \bar\sigma_\gamma \# (-\sigma_\gamma)\rangle\,,
\end{equation}
where $c_1(TV)$ is the first Chern class of $TV$.
\end{rem}

\subsection{Behavior of the Conley-Zehnder index under symplectic reduction}

In this section we address the question of the relation between the Conley-Zehnder 
index of a periodic orbit and the Conley-Zehnder index of its symplectic reduction. 
This will be important later. Again, for the sake of simplicity, we will only 
consider contractible periodic orbits.

Let $V$ be a symplectic manifold and $h: V \times \R \to \R$ a time-dependent 
Hamiltonian on $V$ with a first integral $f: V \to \R$, that is, a function $f$ constant along the orbits of $h$. Denote by $X^t_h$ and $X_f$ 
the Hamiltonian vector fields of $h$ and $f$ respectively. Consider a Riemannian 
metric on $V$ induced by a compatible almost complex structure.

Let $Z$ be a regular level of $f$ and suppose that $X_f$ generates a free circle action on $Z$. Denote by $W$ the Marsden-Weinstein reduced symplectic manifold $Z/S^1$. The Hamiltonian $h$ induces a Hamiltonian $g$ on $W$ whose Hamiltonian flow 
$\psi_t$ satisfies the relation $\pi \circ \vr_t = \psi_t \circ \pi$, where $\vr_t$ 
is the Hamiltonian flow of $h$ and $\pi: Z \to W$ is the quotient projection. 
In particular, every periodic orbit $\tgamma$ of $X^t_h$ gives rise to a 
periodic orbit $\gamma = \pi \circ \tgamma$ of $X^t_g$ with the same period.

\begin{lem} \label{lem:main}
Suppose that the linearized Hamiltonian flow of $h$ on $Z$ leaves the distribution $\text{span}\{\nabla f\}$ invariant. Let $\tgamma$ be a closed orbit of $X_h$ 
contractible in $Z$ and $\sigma_{\tgamma}: D \to Z$ a capping disk for 
$\tgamma$. Then the capping disk $\sigma_\gamma := \pi \circ \sigma_{\tgamma}$ for the reduced 
periodic orbit $\gamma$ satisfies
\[
\mu_{CZ} (\tgamma,\sigma_{\tgamma}) = \mu_{CZ} (\gamma,\sigma_\gamma)\,.
\]
\end{lem}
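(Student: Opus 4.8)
The plan is to reduce the comparison of the two Conley-Zehnder indices to a splitting of the linearized flow along $\tgamma$ that separates the ``reduction directions'' from the rest, and then to use the Direct Sum and Zero (or Shear) axioms to kill the spurious contributions. First I would observe that along the orbit $\tgamma$ inside $Z$ the tangent bundle $TV|_{\tgamma}$ decomposes $\varphi_t$-equivariantly: the hypothesis that the linearized flow preserves $\mathrm{span}\{\nabla f\}$, together with the fact that $X_f$ generates an $S^1$-action whose orbit direction is also preserved (since $f$ is a first integral, so $[X_h,X_f]=0$), shows that the rank-two symplectic subbundle $E:=\mathrm{span}\{X_f,\nabla f\}$ is invariant under the linearized Hamiltonian flow of $h$. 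Its symplectic orthogonal $E^{\omega}$ is then also invariant, and along $Z$ the quotient $E^{\omega}/(E^{\omega}\cap TZ)$ is naturally the pullback by $\pi$ of $TW$ along $\gamma$, with the linearized flow on it corresponding under $d\pi$ to the linearized flow of $\psi_t$ on $W$. Concretely one has $TZ = E^{\omega}\oplus \mathrm{span}\{X_f\}$ (since $X_f$ spans the symplectic complement of $TZ$ inside $E$... one must be careful with the exact book-keeping here) and $d\pi$ identifies $E^{\omega}$, modulo the orbit direction, with $TW$.

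Next, having fixed the capping disk $\sigma_{\tgamma}$ in $Z$ and a symplectic trivialization of $\sigma_{\tgamma}^{*}(TV)$, I would arrange the trivialization to respect the splitting $TV|_{\tgamma} = E \oplus E^{\omega}$ over the boundary circle, and moreover to restrict to a trivialization of $\pi^{*}TW$ (pulled back along $\sigma_\gamma = \pi\circ\sigma_{\tgamma}$) on the $E^{\omega}/(\text{orbit})$ part. This is legitimate because the relevant trivializations are unique up to homotopy and $\pi$ is a submersion; the point is that the trivialization upstairs can be taken compatible with the trivialization downstairs that defines $\mu_{CZ}(\gamma,\sigma_\gamma)$. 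With such a choice, the symplectic path $\widetilde\Gamma(t)$ of \eqref{spath} for $\tgamma$ becomes block-upper-triangular with respect to $E\oplus E^{\omega}$: the $E^{\omega}$-block, after the further reduction modulo the orbit line, is exactly the path $\Gamma(t)$ defining $\mu_{CZ}(\gamma,\sigma_\gamma)$, the block coming from the $X_f$-direction is the identity (the Reeb/Hamiltonian orbit direction is always fixed by its own linearized return map, closed orbit being periodic with its natural period), and the $\nabla f$-direction contributes a shear. By the Direct Sum and Concatenation axioms, plus Example~\ref{ex:CZ1}/the Zero and Shear axioms applied to the trivial and shear blocks, all contributions other than the $\Gamma(t)$-block vanish, giving $\mu_{CZ}(\widetilde\Gamma)=\mu_{CZ}(\Gamma)$.

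I expect the main obstacle to be the careful identification of the linearized reduced flow $d\psi_t$ with the induced map on $E^{\omega}/\mathrm{span}\{X_f\}$, together with checking that the $2$-dimensional ``extra'' block $E = \mathrm{span}\{X_f,\nabla f\}$ genuinely contributes zero. The delicate point is that $E$ is a symplectic subbundle only in a pointwise-linear-algebra sense along $\tgamma$ (one must verify $\omega(X_f,\nabla f)\neq 0$, which holds since $\nabla f$ is the metric dual and the metric is $\omega(\cdot,J\cdot)$, so $\omega(X_f,\nabla f)=\omega(J\,\mathrm{grad}\,f\cdot\ldots)$ — this needs the compatible metric hypothesis), and that the linearized flow on $E$ is unipotent of shear type rather than a genuine rotation, so that its $\mu_{CZ}$ is $\tfrac12\sign{B(0)}-\tfrac12\sign{B(T)}=0$. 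One should also verify that the capping disk $\sigma_\gamma$ is indeed a valid capping disk for $\gamma$ (i.e.\ $\gamma$ is contractible in $W$), which is immediate since $\pi\circ\sigma_{\tgamma}$ has boundary $\gamma$. Finally, because all the identifications are through the $\pi$-pullback, the change-of-capping formula \eqref{changecap} upstairs and downstairs are compatible, so the claimed equality of indices is independent of the auxiliary trivialization choices.
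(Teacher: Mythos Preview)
Your overall architecture is the same as the paper's: set $E=\mathrm{span}\{X_f,\nabla f\}$, use invariance of $E$ and $E^\omega$ under $d\varphi_t$, identify the $E^\omega$-block with the reduced linearized flow via $d\pi$, and argue that the $E$-block contributes nothing to the index. However, there are two genuine gaps.

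First, your treatment of the $E$-block is not quite right. You assert that the linearized flow on $E$ is ``unipotent of shear type'' and then invoke the Shear axiom to get $\tfrac12\sign B(0)-\tfrac12\sign B(T)=0$, but you never argue why this difference vanishes (you would need $B(T)=0$, since $B(0)=0$). In fact the hypothesis gives you more: since $d\varphi_t$ fixes $X_f$ and preserves the \emph{line} $\mathrm{span}\{\nabla f\}$, in a trivialization sending $X_f\mapsto e_1$ and $\nabla f\mapsto\mathrm{span}\{f_1\}$ the $E$-block is \emph{diagonal}, hence a symmetric symplectic $2\times 2$ matrix with eigenvalue $1$, which forces it to be the identity. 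That is the paper's argument, and it is exactly where the assumption on $\nabla f$ is used; the remark following the lemma shows that without this assumption the block really is a nontrivial shear and the index can jump by $\pm\tfrac12$.

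Second, arranging the trivialization to respect $E\oplus E^\omega$ only ``over the boundary circle'' is not enough: to obtain a legitimate trivialization of $\sigma_\gamma^\ast TW$ you must send $E^\omega$ to a fixed subspace over the \emph{entire} disk $\sigma_{\tgamma}$, and this is possible precisely because $\sigma_{\tgamma}$ lands in $Z$, so $X_f$, $\nabla f$ and $E^\omega$ are globally defined over it. Related to this, your quotient bookkeeping is tangled: since $X_f\in E$ and $E\cap E^\omega=\{0\}$, the map $d\pi|_{E^\omega}\colon E^\omega\to TW$ is already an isomorphism, so no ``further reduction modulo the orbit line'' is needed, and the block decomposition is genuinely block-diagonal, not merely block-upper-triangular.
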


\begin{proof}
Denote by $\Dd$ the symplectic distribution generated by $X_f$ and $\nabla f$. 
The hypothesis on $\nabla f$ and the fact that $f$ is a first integral for $h$ imply 
that $\Dd$ is invariant under $d\vr_t$. Hence, the symplectic orthogonal complement 
$\Dd^\omega$ is also invariant under $d\vr_t$. 

Let $\Phi: \sigma_\tgamma^* TV \to D^2 \times \R^{2d}$ be a (unique up to homotopy)
trivialization of $TV$ over $\sigma_\tgamma$ ($\dim V = 2d$). Since $X_f$, $\nabla f$ 
and $\Dd^\omega$ are defined over the whole disk $\sigma_\tgamma$, one can find a
symplectic bundle isomorphism $\Psi: D^2 \times \R^{2d} \to D^2 \times \R^{2d}$ that covers the identity and satisfies:
\begin{itemize}
\item[(P1)] $\pi_2(\Psi(\Phi(X_f))) = e_1$ and $\pi_2(\Psi(\Phi(\nabla f))) \in \text{span}\{f_1\}$, where $\{e_1,\ldots,e_d,f_1,\ldots,f_d\}$ is a fixed symplectic
basis in $\R^{2d}$ and $\pi_2: D^2 \times \R^{2d} \to \R^{2d}$ is the projection onto 
the second factor;
\item[(P2)] $\pi_2(\Psi(\Phi(\sigma_\tgamma^* \Dd^\omega))) = \text{span}\{e_2,\ldots,e_n,f_2,\ldots,f_n\}$.
\end{itemize}

Note that $\sigma_\gamma = \pi \circ \sigma_\tgamma$ is a capping disk for 
$\gamma$ and the differential of $\pi$ induces the identification 
$d\pi|_{\Dd^\omega}: \sigma_\tgamma^* \Dd^\omega \to \sigma_\gamma^* TW$. Hence, 
in order that $\Phi$ induces a trivialization over $\sigma_\gamma$ it is enough 
to choose it such that it sends $\Dd^\omega$ to a fixed symplectic subspace in 
$\R^{2d}$. Property (P2) ensures that the trivialization 
$\Lambda := \Psi \circ \Phi$ satisfies this property. In fact, consider the 
splitting $\R^{2d} = E_1 \oplus E_2$, where $E_1 = \text{span}\{e_1,f_1\}$ and 
$E_2 = \text{span}\{e_2,\ldots,e_n,f_2,\ldots,f_n\}$. We have that $\Lambda(\sigma_\tgamma^* \Dd^\omega) = D^2 \times E_2$ and the trivialization 
over $\sigma_\gamma$ is then given by
\[
\Lambda \circ (d\pi|_{\Dd^\omega})^{-1}: \sigma_\gamma^* TW \to D^2 \times E_2\,.
\]
Now, define the symplectic path
\[
\Gamma(t) = \Lambda(\tgamma(t))\circ d\vr_t(\tgamma(0))\circ 
\Lambda^{-1}(\tgamma(0))\,,
\]
so that $\mu(\tgamma,\sigma_\tgamma) = \mu(\Gamma)$. Since $f$ is a first integral, 
$X_f$ is preserved by $d\vr_t$ and, by hypothesis, $\text{span}\{\nabla f\}$ is 
preserved as well. Thus, by property (P1), $\Gamma|_{E_1}$ is a symmetric symplectic path in $\R^2$ with an eigenvalue one. But a symmetric symplectic isomorphism in $\R^2$ with an eigenvalue one is necessarily the identity.

Consequently, the Direct Sum property of the index yields
\[
\mu(\Gamma) = \mu(\Gamma|_{E_1}) + \mu(\Gamma|_{E_2}) = \mu(\Gamma|_{E_2}) = \mu(\gamma,\sigma_\gamma)\,, 
\]
finishing the proof of the Lemma.
\end{proof}

\begin{rem}
The assumption on $\text{span}\{\nabla f\}$ is necessary. In order to show this, 
consider the Hamiltonian $h: \C^2 \to \R$ given by $h(z_1,z_2) = g(|z_1|^2+|z_2|^2)$,
where $g$ is a smooth real function. It is obviously invariant under the Hamiltonian 
circle action generated by $f(z_1,z_2) = |z_1|^2 + |z_2|^2$ whose reduced symplectic 
manifold is $S^2$. Every reduced orbit is a constant solution whose constant capping 
disk has index zero. Consequently, by equation \eqref{changecap} and the fact that $c_1(TS^2) = 2$, the index of a reduced orbit is given by an integer multiple of four, whatever is the choice of the capping disk. However, one can show that a nonconstant orbit $\tgamma$ of $h$ has 
index
\begin{equation*}
\mu(\tgamma) =
\begin{cases}
7/2\text{ if }g^{\prime\prime}(f(\tgamma))<0\\
4\text{ if }g^{\prime\prime}(f(\tgamma))=0\\
9/2\text{ if }g^{\prime\prime}(f(\tgamma))>0
\end{cases}.
\end{equation*}
The hint to show this is the existence of a trivialization over a capping disk
$\sigma_\tgamma$ such that the linearized Hamiltonian flow restricted to the 
subspace spanned by $X_f$ and $\nabla f$ is given by the symplectic shear
\[
\left(\begin{matrix}
1 & -g^{\prime\prime}(f(\tgamma))t \\
0 & 1 \\
\end{matrix}\right)\,.
\]
Note that the linearized Hamiltonian flow of $h$ preserves $\nabla f$ precisely 
when $g^{\prime\prime}(f(\tgamma))=0$.
\end{rem}

\begin{rem}
The hypothesis that $\tgamma$ is contractible \emph{in} $Z$ is also necessary (notice that to define the Conley-Zehnder index of $\tgamma$ we need only to suppose that $\tgamma$ is contractible \emph{in} $V$). 
As a matter of fact, let $W$ be a symplectic manifold with first Chern class 
different from one and consider on $V := W \times \C$ the circle action generated 
by $f(p,z) = |z|^2$. It is easy to see that every orbit $\tgamma$ of $f$ has a 
capping disk with index two. On the other hand, the reduced orbit is a constant 
solution $\gamma$ whose constant capping disk has index zero. Consequently, the 
hypothesis on $c_1(TW)$ and equation \eqref{changecap} imply that there is no capping disk for $\gamma$ with 
index two. A less trivial argument can give examples where the orbits are not 
contractible but homologically trivial.
\end{rem}

\section{Cylindrical contact homology}
\label{s:cch}

There are several versions of contact homology (see~\cite{Bo3} for a survey). 
A suitable one for our purposes is cylindrical contact homology whose definition 
is closer to the usual construction of Floer homology \cite{Fl1, Fl2, Fl3} but 
with some rather technical differences. The aim of this section is to sketch 
this construction. Details can be found in \cite{Bo1,EGH,Ust,vK} and references 
therein.

Let $\alpha$ be a contact form on $N^{2n+1}$ with contact structure 
$\xi = \text{ker }\alpha$ and let $R_\alpha$ be its Reeb vector field. 
For the sake of simplicity, we will assume that $c_1(\xi) = 0$. Denote by $\Pp$ 
the set of periodic orbits of $R_\alpha$ and suppose that $R_\alpha$ is 
nondegenerate, i.e. every closed orbit $\gamma \in \Pp$ is nondegenerate. 
A periodic orbit of $R_\alpha$ is called {\it bad} if it is an even multiple 
of a periodic orbit whose parities of the Conley-Zehnder index of odd and even 
iterates disagree. An orbit that is not bad is called {\it good}. Denote the set 
of good periodic orbits by $\Pp^0(\alpha)$.

Consider the chain complex $CC_*(\alpha)$ given by the graded group with 
coefficients in $\Q$ generated by good periodic orbits of $R_\alpha$ graded 
by their Conley-Zehnder index plus $n-2$. This extra term $n-2$ is not important 
in the definition of cylindrical contact homology but the reason for its use 
will be apparent later. Let us denote the degree of a periodic orbit by $|\gamma|$.

The boundary operator $\partial$ is given by counting rigid holomorphic cylinders 
in the symplectization $(W,\om) := (\R \times N,d(e^t\alpha))$. More precisely, 
fix an almost complex structure $J$ on $W$ compatible with $\om$ such that $J$ is invariant by $t$-translations,  
$J(\xi) = \xi$ and $J(\frac{\partial}{\partial t}) = R_\alpha$. The space of 
these almost complex structures is contractible. Let 
$\Sigma = S^2\setminus\Gamma$ be a punctured rational curve, where $S^2$ is endowed with a 
complex structure $j$ and $\Gamma = \{x,y_1,...,y_s\}$ is the set of 
(ordered) punctures of $\Sigma$. We will consider holomorphic curves from $\Sigma$ 
to the symplectization $W$, that is, smooth maps $F = (a,f): \Sigma \to W$ 
satisfying $dF \circ j = J \circ dF$. We restrict ourselves to holomorphic 
curves such that, for polar coordinates $(\rho,\theta)$ centered at a puncture 
$p \in \Gamma$, the following conditions hold:
\begin{equation*}
\lim_{\rho\to 0} a(\rho,\theta) = 
\begin{cases}
+\infty \text{ if $p=x$} \\
-\infty \text{ if $p=y_i$ for some $i=1,\ldots,s$}.
\end{cases}
\end{equation*}
\begin{equation*}
\lim_{\rho\to 0} f(\rho,\theta) = 
\begin{cases}
\gamma(-T\theta/2\pi) \text{ if $p=x$} \\
\gamma_i(T_i\theta/2\pi) \text{ if $p=y_i$ for some $i=1,\ldots,s$}.
\end{cases}
\end{equation*}
where $\gamma$ and $\gamma_i$ are good periodic orbits of $R_\alpha$ of periods 
$T$ and $T_i$ respectively.
Denote the set of such holomorphic curves by $\Mm(\gamma,\gamma_1,...,\gamma_s;J)$ and notice that $j$ is not fixed. Define an equivalence relation $\simeq$ on $\Mm(\gamma,\gamma_1,...,\gamma_s;J)$ 
by saying that $(F = (a,f),j)$ and $(\tilde F = (\tilde a,\tilde f),\tilde j)$ are equivalent 
if there is a shift $\tau \in \R$ and a biholomorphism $\vr: (S^2,j) \to 
(S^2,\tilde j)$ 
such that $\vr(p)=p$ for every $p \in \Gamma$ and
\[
(a,f) = (\tilde a \circ \vr + \tau,\tilde f \circ \vr)\,.
\]
Define the moduli space $\widehat\Mm(\gamma,\gamma_1,...,\gamma_s;J)$ as $\Mm(\gamma,\gamma_1,...,\gamma_s;J)/\simeq$. A crucial ingredient in order to understand the set $\widehat\Mm(\gamma,\gamma_1,...,\gamma_s;J)$ is the operator 
$D_{(F,j)}: T_F\mathcal B^{1,p,\delta}(\Sigma,V) \times T_j\mathcal T \to L^{p,\delta}(\Sigma,F^*TV)$ called the 
{\it vertical differential} and given by
\[
D_{(F,j)} (\psi,y) = \nabla\psi + J\circ\nabla\psi\circ j + (\nabla_\psi J)\circ DF\circ j + J\circ DF\circ y,
\]
where $p>2$, $\delta>0$ is sufficiently small, $\mathcal B^{1,p,\delta}(\Sigma,V)$ is the Banach manifold consisting of $W^{1,p}_{\text{loc}}$ maps from $\Sigma$ to $W$ with a suitable behavior near the punctures, $\mathcal T$ stands for a Teichm\"uller slice through $j$ as defined in \cite{Wendl} and $L^{p,\delta}(\Sigma,F^*TV)$ is a weighted Sobolev space given by the completion of the space of smooth anti-holomorphic 1-forms $\Omega^{0,1}(\Sigma,F^*TV)$ with respect to suitable norms, see \cite{Wendl} for details. Notice that we are tacitly taking the Levi-Civita connection given by the metric induced by the symplectic form and the almost complex structure.

This is a Fredholm operator with index given by
\[
|\gamma| - \sum_{i=1}^s |\gamma_i| + \dim \text{Aut}(\Sigma,j),
\]
where $\text{Aut}(\Sigma,j)$ is the group of automorphisms of $(\Sigma,j)$, see page 376 in \cite{Wendl}.
We say that $J$ is {\it regular} if $D_{(F,j)}$ is surjective for every holomorphic 
curve $(F,j)$ (it does not depend on the choice of the Teichm\"uller slice, see Lemma 3.11 in \cite{Wendl}). It turns out that if $J$ is regular then 
$\widehat\Mm(\gamma,\gamma_1,...,\gamma_s;J)$ is a smooth manifold with dimension given 
by
$$ |\gamma| - \sum_{i=1}^s |\gamma_i| -1. $$
Moreover, $\widehat\Mm(\gamma,\gamma_1,...,\gamma_s;J)$ admits a compactification 
$\overline \Mm(\gamma,\gamma_1,...,\gamma_s;J)$ with a coherent orientation \cite{BM} whose boundary is given by 
holomorphic buildings \cite{BEH+}. In particular, if $J$ is regular, then 
$\overline \Mm (\gamma,\gamma_1,...,\gamma_s;J)$ is a finite set with signs whenever $|\gamma| - \sum_{i=1}^s|\gamma_i| = 1$. 

However, unlike Floer homology in the monotone case, regularity is not achieved 
in general by a generic choice of $J$. Instead, one needs to use multi-valued
perturbations equivariant with respect to the action of biholomorphisms and this turns out 
to be a very delicate issue. Several ongoing approaches have been developed to 
give a rigorous treatment to this problem, see \cite{CM,HWZ1,HWZ2,HWZ3}.
Consequently, following \cite[Remark 9]{BO}, we will assume the following technical condition throughout this work.

\vskip .2cm
\noindent {\bf Transversality assumption.}
We suppose that the almost complex structure $J$ is regular for holomorphic curves with index less or equal than two (the index of a holomorphic curve is defined as the degree of the positive periodic orbit minus the sum of the degrees of the negative ones). Moreover, we will also assume the existence of regular almost complex structures for holomorphic curves with index less or equal than one in cobordisms and with index less or equal than zero in 1-parameter families of cobordisms.
\vskip .2cm

We have then the following result on the structure of moduli spaces of holomorphic cylinders in symplectizations.

\begin{prop}\cite{EGH}
\label{transversality}
Under the previous transversality assumption, the moduli spaces \linebreak $\overline \Mm(\gamma,\gamma_1)$ of dimension zero consist of finitely many points with rational weights. The moduli spaces $\overline \Mm(\gamma,\gamma_1)$ of dimension one have boundary given by finitely many points corresponding to holomorphic buildings with rational weights whose sum counted with orientations vanishes. Moreover, if a holomorphic building in the boundary consists of a broken cylinder then its weight is given by the product of the weights of each cylinder.
\end{prop}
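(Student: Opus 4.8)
The plan is to deduce Proposition~\ref{transversality} from the standard compactness and gluing machinery for punctured holomorphic curves in symplectizations, applied in the special case of cylinders (one positive and one negative puncture) and specialized to low index via the Transversality assumption. First I would recall that under the Transversality assumption the almost complex structure $J$ is regular for all holomorphic curves of index $\leq 2$, so in particular the quotient moduli spaces $\widehat\Mm(\gamma,\gamma_1;J)$ are smooth manifolds of dimension $|\gamma|-|\gamma_1|-1$ whenever this number is $\leq 1$. For the dimension-zero case ($|\gamma|-|\gamma_1|=1$) I would invoke the SFT compactness theorem of \cite{BEH+}: the compactified space $\overline\Mm(\gamma,\gamma_1;J)$ adds only holomorphic buildings whose total index equals $1$; since every nontrivial level has index $\geq 1$ (again by regularity, no negative-index curves occur) and since the count $|\gamma|-|\gamma_1|-1=0$ forces the manifold to be compact already modulo the trivial cylinders which are excluded, $\overline\Mm(\gamma,\gamma_1;J)$ is a compact zero-manifold, hence a finite set; the rational weights come from the multi-valued perturbation scheme / coherent orientations of \cite{BM}, each point carrying the branching weight $1/k$ coming from the automorphisms of the underlying curve, exactly as in \cite{EGH}.

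Next I would treat the dimension-one case ($|\gamma|-|\gamma_1|=2$). Here $\widehat\Mm(\gamma,\gamma_1;J)$ is a smooth one-manifold, and by SFT compactness its compactification $\overline\Mm(\gamma,\gamma_1;J)$ is a compact one-manifold with boundary, the boundary consisting of holomorphic buildings of total index $2$ with more than one nontrivial level. A dimension count shows that the only possibility is a two-level building, each level an index-one cylinder (a \emph{broken cylinder}): any level with more punctures would have index $\geq 1$ but would leave no room for the complementary level to be nonconstant and index $\geq 1$ while summing to $2$, and bubbling off of spheres is excluded in the symplectization by the usual energy/asymptotics argument. Thus $\partial\,\overline\Mm(\gamma,\gamma_1;J)$ is the finite set of broken cylinders $\overline\Mm(\gamma,\gamma';J)\times\overline\Mm(\gamma',\gamma_1;J)$ over intermediate orbits $\gamma'$ with $|\gamma'|=|\gamma|-1$. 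Since the oriented boundary of a compact oriented one-manifold is zero in the appropriate sense, the signed (weighted) count of these boundary points vanishes; and the gluing theorem identifies the weight of a boundary building with the product of the weights of its two cylinder factors, which is the final assertion.

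The main obstacle — and the reason the statement is phrased as it is — is transversality: a generic $J$ does \emph{not} make $D_{(F,j)}$ surjective for multiply covered cylinders, so the clean manifold-with-boundary picture above is only available after the multi-valued (branched) perturbation schemes of \cite{CM,HWZ1,HWZ2,HWZ3}, which is precisely what the Transversality assumption postulates. Once that assumption is granted, the proof is essentially bookkeeping: the genuine mathematical content is the SFT compactness theorem of \cite{BEH+}, the gluing/orientation results of \cite{BM}, and the index formula quoted from \cite{Wendl}; everything else is the dimension count excluding multi-level breakings other than broken cylinders. I would therefore present the proof as an assembly of these cited ingredients rather than reproving any of them, emphasizing only the low-index dimension count that rules out sphere bubbling and extra punctures, and noting that the rational rather than integral weights are an artifact of the branched perturbations.
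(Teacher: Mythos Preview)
The paper does not give a proof of this proposition; it is stated with a citation to \cite{EGH} and used as a black box, so there is no in-paper argument to compare yours against. Your sketch does invoke the correct ingredients (SFT compactness from \cite{BEH+}, gluing and coherent orientations from \cite{BM}), but there is a genuine error in your dimension-one analysis.

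You assert that the only possible boundary configurations of $\overline\Mm(\gamma,\gamma_1)$ when $|\gamma|-|\gamma_1|=2$ are broken cylinders. This is false: a level of a holomorphic building may be \emph{disconnected}, with some components trivial cylinders. In particular one can have a two-level building whose top level is a pair of pants $\gamma\to(\gamma_1,\gamma')$ and whose bottom level consists of a trivial cylinder over $\gamma_1$ together with a plane asymptotic to $\gamma'$; when $|\gamma'|=1$ both the pair of pants and the plane are rigid, and this configuration legitimately appears in $\partial\,\overline\Mm(\gamma,\gamma_1)$. Your sentence ``any level with more punctures would have index $\geq 1$ but would leave no room for the complementary level\ldots'' implicitly assumes each level is a single connected curve, which is not the case. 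Note also that the proposition is worded precisely so as \emph{not} to exclude such buildings: it says the boundary consists of holomorphic buildings whose signed weighted sum vanishes, and only then adds that \emph{if} a boundary building happens to be a broken cylinder, its weight is the product of the factor weights. This is exactly what the paper needs later: in the proof of Proposition~\ref{prop:even-2} the even/nice hypothesis on $\alpha$ is invoked specifically to rule out these non-cylindrical boundary buildings (pairs of pants with rigid planes). Your stronger claim, were it correct, would render that hypothesis superfluous.
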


We expect the transversality assumption to be completely removed using the polyfold theory developed by Hofer, Wysocki and Zehnder, see \cite{HWZ1,HWZ2,HWZ3}.

Thus fix $s=1$, that is, let $\Sigma$ be a cylinder. By the discussion above, 
if two periodic orbits $\gamma$ and $\bar\gamma$ satisfy 
$|\gamma| = |\bar\gamma|+1$ then $\overline\Mm(\gamma,\bar\gamma)$ is a finite set. 
This enables us to define the boundary operator in the following way. 
Let $\gamma$ be a periodic orbit of multiplicity $m(\gamma)$, i.e. $\gamma$ is a 
covering of degree $m(\gamma)$ of a simple closed orbit. Define
\[
\partial\gamma = m(\gamma)\sum_{\bar\gamma \in \Pp^0(\alpha),|\bar\gamma| = |\gamma|-1} \sum_{F \in \overline\Mm(\gamma,\bar\gamma)}\text{sign}(F)\text{weight}(F) \bar\gamma\,,
\]
where $\text{sign}(F)$ is the sign of $F$ determined by the coherent orientation of $\overline\Mm(\gamma,\bar\gamma)$ and $\text{weight}(F)$ is the weight established in the previous proposition. A somewhat different definition of the boundary operator is given in \cite{EGH} using asymptotic markers, but one can check that this is equivalent to the definition above. Notice the similarity with Floer homology, but we have to consider weights in the boundary operator.

The next proposition is a generalization of section 1.9.2 in \cite{EGH}, where it is shown that cylindrical contact homology 
is well defined and an invariant of the contact structure for nice contact forms. The specific nature of the weights in the boundary operator does not play any role in the proof; the point is to avoid the presence of certain tree-like curves in the boundary of moduli spaces of dimension one and it is here that the hypothesis on the contact forms comes in. As a matter of fact, as will be accounted in the proof, the assumption that the contact form is even implies that there is no holomorphic curve of index one in the symplectization and the hypothesis of non-existence of periodic orbits of degree $1$, $0$ and $-1$ is to avoid rigid planes (rigid means that it belongs to a moduli space of dimension zero) in symplectizations, cobordisms and cobordisms in 1-parameter families of cobordisms respectively. 

Following exactly as in the proof in \cite{Bo1,EGH,Ust}, one can extend the argument to even contact forms and prove 
Proposition~\ref{prop:even-1}, which we restate here for the convenience of the 
reader.

\begin{prop} \label{prop:even-2}
Let $(N,\xi)$ be a contact manifold with an even or nice nondegenerate 
contact form $\alpha$. Then the boundary operator 
$\partial:C_\ast (N,\alpha) \to C_{\ast-1} (N,\alpha)$ satisfies $\partial^2 = 0$ 
and the homology of $(C_\ast (N,\alpha), \partial)$ is independent of the choice 
of even or nice nondegenerate contact form $\alpha$. Hence, the cylindrical 
contact homology $HC_\ast(N,\xi; \Q)$ is a well defined invariant of the contact
manifold $(N,\xi)$.
\end{prop}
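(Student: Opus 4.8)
The plan is to follow, essentially verbatim, the argument in Section~1.9.2 of~\cite{EGH} (see also~\cite{Bo1,Ust,vK}), which proves the statement for nice contact forms, and to indicate the (elementary, parity-theoretic) modification needed in the even case. First I would prove that $\partial^2=0$. Fix an even or nice nondegenerate contact form $\alpha$ and an almost complex structure $J$ on the symplectization $(W,\om)=(\R\times N,d(e^t\alpha))$ which is regular in the sense of the Transversality assumption above. For good orbits $\gamma,\bar\gamma$ with $|\gamma|=|\bar\gamma|+2$, one considers the one-dimensional moduli space $\overline\Mm(\gamma,\bar\gamma;J)$; by the SFT compactness theorem~\cite{BEH+} it is a compact one-manifold whose boundary is a finite union of broken holomorphic buildings. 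One must show that, under our hypotheses, the only buildings that occur are two-level broken cylinders $\gamma\to\gamma'\to\bar\gamma$ with $|\gamma'|=|\bar\gamma|+1$; combined with the gluing statement of Proposition~\ref{transversality} and the coherent orientations of~\cite{BM}, this gives $\langle\partial^2\gamma,\bar\gamma\rangle=0$. The only potentially dangerous configurations are buildings containing a rigid holomorphic plane (one positive, no negative punctures) capping a negative end of a rigid pair of pants (or a rigid curve with more negative punctures) in some symplectization level; by the dimension formula $\dim\widehat\Mm(\gamma,\gamma_1,\dots,\gamma_s;J)=|\gamma|-\sum_i|\gamma_i|-1$, such a rigid plane caps an orbit of degree $1$. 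For a nice $\alpha$ this is excluded by hypothesis. For an even $\alpha$, the index of any holomorphic curve (the degree of its positive orbit minus the sum of the degrees of its negative orbits) is even, whereas a rigid curve in a symplectization has index $1$; hence there are no rigid holomorphic curves in the symplectization at all. In particular $\partial=0$ for an even contact form, so $\partial^2=0$ holds trivially.

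Next I would prove invariance. Given two even or nice nondegenerate contact forms $\alpha_+,\alpha_-$ for $\xi$, choose an exact symplectic cobordism between their symplectizations and an almost complex structure satisfying the cobordism part of the Transversality assumption; counting rigid holomorphic cylinders in the cobordism defines a chain map $\Phi:C_\ast(N,\alpha_+)\to C_\ast(N,\alpha_-)$. Using the reversed cobordism one gets $\Phi':C_\ast(N,\alpha_-)\to C_\ast(N,\alpha_+)$, and a one-parameter family of cobordisms deforming the concatenation to a trivial cobordism yields a chain homotopy between $\Phi'\circ\Phi$ and the identity (and symmetrically); hence $\Phi$ is a quasi-isomorphism and $HC_\ast(N,\xi;\Q)$ does not depend on the chosen contact form. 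As in the previous step, well-definedness of $\Phi$, the chain-map identity and the chain-homotopy identities are reduced via SFT compactness to excluding boundary configurations that contain a rigid plane: in a level of a fixed cobordism such a plane caps an orbit of degree $0$, while in a level belonging to the one-parameter family of cobordisms it caps an orbit of degree $-1$. If both $\alpha_\pm$ are nice these degrees are excluded by hypothesis. If one of $\alpha_\pm$ is even, the degree $-1$ is odd and hence impossible; and although a rigid plane in a fixed cobordism could in principle cap an even-degree ($=0$) orbit, such a plane necessarily occurs together with a rigid curve in a symplectization level of the even form, which would then have index $1$ and therefore cannot exist. When one of $\alpha_\pm$ is even and the other nice, the two exclusion mechanisms combine to rule out every bad configuration. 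This establishes that the homology of $(C_\ast(N,\alpha),\partial)$, and hence $HC_\ast(N,\xi;\Q)$, is independent of the chosen even or nice nondegenerate contact form.

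The hard part is entirely foundational: it is the transversality problem already isolated in the Transversality assumption, namely that a generic $J$ does not achieve transversality for these moduli spaces and their compactifications, so that one must either postulate it (as we do) or appeal to the completion of the polyfold program of Hofer, Wysocki and Zehnder~\cite{HWZ1,HWZ2,HWZ3}. Granting that, the argument is a routine adaptation of the nice case, the only genuinely new ingredient being the parity bookkeeping for even forms described above.
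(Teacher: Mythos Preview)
Your proposal is correct and follows essentially the same approach as the paper's proof: both reduce $\partial^2=0$, the chain-map identity, and the chain-homotopy identity to excluding boundary buildings containing planes, and both use the same two exclusion mechanisms (parity of indices for even forms, absence of orbits of degree $-1,0,1$ for nice forms). The paper's enumeration of the problematic configurations is slightly more explicit---it lists two cases for the chain map and three for the chain homotopy, distinguishing planes in symplectization levels from planes in cobordism levels---whereas you condense this analysis; but the mechanisms you invoke cover precisely the same cases, so there is no substantive difference.
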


\begin{proof}
The proof follows the proofs in \cite{Bo1,EGH,Ust}. We will just 
recall the main steps and explain how to proceed with even contact forms.

If $\alpha$ is even then obviously $\partial^2=0$, since $\partial=0$. To deal with the case that 
$\alpha$ is nice, notice that $\partial^2$ counts broken rigid holomorphic 
cylinders in the symplectization of $\alpha$ that appear (by a gluing argument) as points in the 
boundary of the moduli space of cylinders connecting orbits with index difference 
equal to two. The condition that $R_\alpha$ has no periodic orbit of degree 
one implies that there is nothing else in the boundary of this moduli space. Indeed, we could have in the boundary a tree-like curve with one level of index 1 and the other level consisting of a rigid plane and a vertical cylinder. However, the non-existence of orbits of degree one excludes the existence of these planes.

Thus, $\partial^2$ counts points in the boundary of the moduli space of dimension one and the sum of the weights of these points counted with orientations vanishes. These weights are the products of the weights of the rigid holomorphic cylinders in each level and the number of ways that such cylinders can be glued to each other is given precisely by the multiplicity of the closed orbit where we glue. This is the reason why the factor $m(\gamma)$ appears in the definition of $\partial$. Hence it follows that $\partial^2=0$. 

Now, let us consider the invariance problem. To carry it out, we will construct 
an isomorphism $\Phi: HC_*(N,\alpha) \to HC_*(N,\tilde\alpha)$. Since 
$\tilde\alpha$ defines the same contact structure as $\alpha$ we can write 
$\tilde\alpha = f\alpha$, where $f: N \to \R$ is a smooth positive function. 
Take a function $g: \R \times N \to \R$ such that $g(t,x) = e^t$ for $t>R$, 
$g(t,x) = e^t f(x)$ for $t<-R$ and $\partial_t g>0$, where $R>0$ is a constant big enough. 
It is easy to check that $d(g\alpha)$ is a symplectic form on $\R \times N$. We call 
$(W,\om) := (\R \times N,d(g\alpha))$ a symplectic cobordism with ends 
$W_+ = (R,+\infty) \times N$ and $W_- = (-\infty,-R) \times N$, restricted to 
which $\om$ coincides with the symplectic forms of the symplectizations of 
$\alpha$ and $\tilde\alpha$ respectively. Denote by $J_+$ and $J_-$ the 
corresponding almost complex structures on $W_+$ and $W_-$ as defined previously 
and consider a compatible almost complex structure $J_W$ on $W$ that extends 
$J_-$ and $J_+$.

In order to define $\Phi$, we need to consider holomorphic curves on $W$ in a 
similar fashion to what we did in symplectizations. More precisely, let
$\Sigma$ be as before and fix a periodic orbit 
$\gamma$ of $R_\alpha$ and periodic orbits $\tilde\gamma_1,...,\tilde\gamma_s$ 
of $R_{\tilde\alpha}$. We look at holomorphic curves 
$F: \Sigma \to W$ that are asymptotic to $\gamma$ and
$\tilde\gamma_1,...,\tilde\gamma_s$ at the positive and negative punctures 
respectively. Denote the set of such curves by $\Mm(\gamma,\tilde\gamma_1,...,\tilde\gamma_s;J_W)$.

Analogously to symplectizations, define an equivalence relation $\simeq$ on 
$\Mm(\gamma,\tilde\gamma_1,...,\tilde\gamma_s;J_W)$ by saying that $F = (a,f)$ 
and $\tilde F = (\tilde a,\tilde f)$ are equivalent if there is a biholomorphism 
$\vr: S^2 \to S^2$ that restricted to $\Gamma$ is the identity and
\[
(a,f) = (\tilde a \circ \vr,\tilde f \circ \vr)\,.
\]
The moduli space $\widehat\Mm(\gamma,\tilde\gamma_1,...,\tilde\gamma_s;J_W) 
:= \Mm(\gamma,\tilde\gamma_1,...,\tilde\gamma_s;J_W)/\simeq$ admits a 
compactification 
\[
\overline \Mm(\gamma,\tilde\gamma_1,...,\tilde\gamma_s;J_W)
\]
whose boundary is given by holomorphic buildings.

Under our transversality assumption, a result similar to Proposition \ref{transversality} holds for symplectic cobordisms 
and it establishes that one can choose $J_W$ such that the moduli spaces $\overline\Mm(\gamma,\tilde\gamma_1;J_W)$ of dimension zero and one have the desired properties. The dimension is given by $|\gamma| - |\gamma_1|$.

We define a map $\Psi: CC_*(N,\alpha) \to CC_*(N,\widetilde\alpha)$ by
\[
\Psi(\gamma) = m(\gamma)\sum_{\tilde\gamma \in \Pp^0(\tilde\alpha),|\tilde\gamma| 
= |\gamma|} \sum_{F \in \overline\Mm(\gamma,\tilde\gamma)}\text{sign}(F)\text{weight}(F)\tilde\gamma\,. 
\]
In order to show that $\Psi$ is a chain map, the idea, as in the proof of 
$\partial^2=0$, is to identify $\partial_{\tilde\alpha}\Psi - \Psi\partial_\alpha$ 
with the boundary of a a moduli space of dimension one. To achieve this identification consider the 
moduli space
\[
\overline\Mm(\gamma;J_W) := \bigcup_{\tilde\gamma \in \Pp^0(\tilde\alpha),\ |\gamma|= |\tilde\gamma| + 1} 
\overline\Mm(\gamma,\tilde\gamma;J_W)\,.
\]
By a gluing argument, the broken cylinders counted in 
$(\partial_{\tilde\alpha}\Psi - \Psi\partial_\alpha)(\gamma)$ 
are contained in $\partial\overline\Mm(\gamma;J_W)$. 
We need to show that there is nothing else than these broken cylinders in 
$\partial\overline\Mm(\gamma;J_W)$. But the compactness results show that the 
boundary is given by holomorphic buildings with two levels. Hence, we may have two possibilities:

\begin{itemize}
\item A pair of pants of index $0$ in the cobordism $W$ and a rigid plane and a vertical cylinder in the symplectization of $\tilde\alpha$.
\item A punctured sphere of index $1$ in the symplectization of $\alpha$ and (possibly several) rigid planes and a rigid cylinder in the cobordism $W$.
\end{itemize}

The first possibility does not hold because if $\tilde\alpha$ is even or nice then there is no rigid holomorphic plane in the symplectization of $\tilde\alpha$. The second possibility, in turn, is forbidden because, if $\alpha$ is even, there is no rigid holomorphic curve in the symplectization and, if $\alpha$ is nice, there is no rigid plane in the cobordism from $\alpha$ to $\tilde\alpha$, since there is no orbit of degree zero. This shows that $\Psi$ is a chain map and consequently it induces a map $\Phi$ in the homology. 

To prove that $\Phi$ is an isomorphism we construct its inverse. Consider 
the map $\tilde\Psi: CC_*(N,\tilde\alpha) \to CC_*(N,\alpha)$ obtained by 
the construction above switching $\alpha$ and $\tilde\alpha$. We claim that 
$\tilde\Psi \circ \Psi$ is chain homotopic to the identity. Indeed, we have 
that 
\[
\tilde\Psi \circ \Psi - Id = \partial_{\alpha} \circ A + 
A \circ \partial_{\alpha}\,,
\] 
where $A: CC_*(N,\alpha) \to CC_{*+1}(N,\alpha)$ is a map of degree $1$ 
obtained in the following way. 

Consider a 1-parameter family of symplectic cobordisms 
$W_\lambda:= (W,\omega_\lambda)$, $\lambda \in [0,1]$, such that 
$W_0$ is the symplectic cobordism given by the gluing of the cobordisms 
from $\alpha$ to $\tilde\alpha$ and from $\tilde\alpha$ to $\alpha$ and 
$W_1$ is the symplectization of $\alpha$. Let $J_\lambda$ be a smooth 
family of almost complex structures compatible with $\omega_\lambda$ and 
consider the set
\[
\overline\Mm(\gamma,\gamma_1,\ldots,\gamma_s;\{J_\lambda\}) = \{(\lambda,F);
\ 0 \leq \lambda \leq 1,\ F \in 
\overline\Mm(\gamma,\gamma_1,\ldots,\gamma_s;J_\lambda)\}\,.
\]
Once again, a result similar to Proposition~\ref{transversality} holds for 
1-parameter families of symplectic cobordisms establishing that one can choose 
$J_\lambda$ such that the moduli spaces $\overline\Mm(\gamma,\gamma_1;\{J_\lambda\})$ of dimension zero and one have the desired properties. Now, the dimension is given by $|\gamma| - |\gamma_1|+1$. Hence if $\gamma$ and $\bar\gamma$ are good periodic orbits of $R_\alpha$ such 
that $|\gamma|-|\bar\gamma| = -1$ then 
$\Mm(\gamma,\bar\gamma;\{J_\lambda\})$ is a finite set. Define
\[
A(\gamma) = m(\gamma)\sum_{\bar\gamma \in \Pp^0(\alpha),|\bar\gamma| = |\gamma|+1} \sum_{F \in \overline\Mm(\gamma,\bar\gamma;\{J_\lambda\})}\text{sign}(F)\text{weight}(F) \bar\gamma\,.
\]
By compactness results, if $|\gamma| = |\bar\gamma|$ then the boundary of 
$\overline\Mm(\gamma,\bar\gamma;\{J_\lambda\})$ is given by components coming 
from the boundary of $[0,1]$ and holomorphic buildings of height two. 
This first component is the union of $\overline\Mm(\gamma,\bar\gamma;J_0)$ 
and $\overline\Mm(\gamma,\bar\gamma;J_1)$ and it counts as 
$\tilde\Psi \circ \Psi - Id$, since every cylinder of index zero in the 
symplectization $W_1$ is trivial. The second one is given by broken cylinders 
of index zero counted by $\partial_{\alpha} \circ A + A \circ \partial_{\alpha}$ 
and, besides these broken cylinders, we might have three possibilities:

\begin{itemize}
\item A pair of pants of index $-1$ in a cobordism $W_\lambda$ and a rigid plane and a vertical cylinder in the symplectization of $\alpha$.
\item A pair of pants of index $1$ in the symplectization of $\alpha$ and a plane of index $-1$ and a cylinder of index $0$ in a cobordism $W_\lambda$.
\item A punctured sphere of index $1$ in the symplectization of $\alpha$ and (possibly several) planes of index $0$ and a cylinder of index $-1$ in a cobordism $W_\lambda$.
\end{itemize}

The first case is discarded because there is no rigid plane in the symplectization if $\alpha$ is even or nice. The second one does not hold if $\alpha$ is even since there is no rigid curve in the symplectization, and, if $\alpha$ is nice, there is no plane of index $-1$ in the cobordism. Finally, the third possibility cannot happen because, if $\alpha$ is even, there is no rigid curve in the symplectization and, if $\alpha$ is nice, there is no plane of index $0$ in the cobordism.
\end{proof}

\section{Proof of Theorem~\ref{thm:main1}}
\label{s:proof1}

Let us first describe the idea of the proof. We consider a Sasaki contact form on $N$ whose Reeb flow has finitely many nondegenerate simple periodic orbits $\gamma_\ell$, $\ell=1,...,m$, where $m$ is the number of edges of the good moment cone. Let $X$ be the Liouville vector field of the corresponding good symplectic cone and consider the $X$-invariant Hamiltonian flow associated to the Reeb flow. For each $\ell=1,...,m$ we choose a suitable lift of this Hamiltonian flow to a linear flow on $\R^{2d}$ using the symplectic reduction process described after Remark \ref{rem:orbBW}. More precisely, we require that the lift $\tilde\gamma_\ell$ of $\gamma_\ell$ is a {\it closed} orbit in $\R^{2d}$. This enables us to apply Lemma \ref{lem:main} and consequently reduces the proof to the computation of the Conley-Zehnder index of $\tilde\gamma_\ell$. For this computation, we can use the global trivialization of $T\R^{2d}$ and, since the lifted flow is given by a 1-parameter subgroup of $\T^d$ via the usual $\T^d$-action in $\R^{2d} \simeq \C^d$, the index is easily computed by the corresponding vector in the Lie algebra. This vector, given by equation (\ref{eq:vector_lift}), is completely determined by the associated good moment cone and it turns out that the degree of every orbit is an even number.

Let $(W, \omega, X)$ be a good toric symplectic cone determined 
by a good moment cone $C\subset(\R^{n+1})^\ast$ defined by
\[
C = \bigcap_{j=1}^d \{x\in(\R^{n+1})^\ast\,:\ 
\ell_j (x) := \langle x, \nu_j \rangle \geq 0\}\,
\]
where $d\geq n+1$ is the number of facets and each $\nu_j$ is a primitive 
element of the lattice $\Z^{n+1} \subset \R^{n+1}$ (the inward-pointing 
normal to the $j$-th facet of $C$).

Let $\nu\in\ft \cong \R^{n+1}$ be any vector in the Lie algebra
of the torus $\T^{n+1}$ satisfying the following two conditions:
\begin{itemize}
\item[(i)]
\[
\nu = \sum_{j=1}^d a_j \nu_j \quad\text{with $a_j\in\R^+$ for all
$j=1,\ldots,d$;}
\]
\item[(ii)] the $1$-parameter subgroup generated by $\nu$ is dense in
$\T^{n+1}$.
\end{itemize}
Let $R_\nu \in\Xx_X (W, \omega) \cong \Xx (N, \xi)$ be the Reeb vector
field of the Sasaki contact form $\alpha_\nu \in\Omega^1 (N, \xi)$.

\begin{lemma}
The Reeb vector field $R_\nu$ has exactly $m$ simple closed orbits, where
\[
m = \ \text{number of edges of $C$.}
\]
\end{lemma}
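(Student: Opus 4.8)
The plan is to exploit the structure of the toric contact form $\alpha_\nu$ and the moment-map picture to locate the closed Reeb orbits. Recall that $(W,\omega,X)$ is the symplectic reduction $Z/K$ of $(\R^{2d}\setminus\{0\},\omega_{\rm st},X_{\rm st})$, and that under this reduction the Reeb flow of $\alpha_\nu$ lifts to the linear flow on $\C^d$ given by $(R_{\tilde\nu})_s\cdot z = (e^{ia_1 s}z_1,\ldots,e^{ia_d s}z_d)$, where $\tilde\nu=(a_1,\ldots,a_d)$ with $\nu=\sum_j a_j\nu_j=\beta(\tilde\nu)$ (condition (i)). First I would observe that the Reeb flow on $N$ preserves the $\T^{n+1}$-action, so its closed orbits are organized by the faces of the moment cone $C$: the preimage $\mu^{-1}(F)$ of a face $F$ is a $\T^{n+1}$-invariant submanifold, and on the preimage of the one-dimensional faces (the edges) the torus acts with one-dimensional orbits, which are the natural candidates for simple closed Reeb orbits.

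The key step is to show that a Reeb orbit through a point $p$ is closed if and only if $p$ projects into (the relative interior of) an edge of $C$. For the "only if" direction I would use condition (ii): since the $1$-parameter subgroup generated by $\nu$ is dense in $\T^{n+1}$, the closure of a Reeb orbit through $p$ is exactly the closure of the $\T^{n+1}$-orbit through $p$; hence the Reeb orbit is closed precisely when the $\T^{n+1}$-orbit through $p$ is a circle, i.e.\ when the stabilizer of $p$ has corank one, i.e.\ when $\mu(p)$ lies in the relative interior of a one-dimensional face of $C$. For the "if" direction, on the preimage of an edge one checks directly — working upstairs in $\C^d$, where the edge corresponds to a codimension-$n$ coordinate subspace cut out by the vanishing of the appropriate $z_j$'s — that the linear Reeb flow $(R_{\tilde\nu})_s$ restricted to the two nonzero coordinates is a rotation, hence periodic, and that this descends to a closed orbit on $N$. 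Each edge thus contributes exactly one simple closed Reeb orbit, giving $m$ orbits where $m$ is the number of edges of $C$.

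The main obstacle I anticipate is bookkeeping the passage between $C$ and the reduced space: one must match the combinatorial edges of $C$ (intersections of $n$ facets, with normals completable to a $\Z$-basis by goodness — Definition~\ref{def:gcone}(ii)) with the actual circle orbits in $N=Z/K$, and verify that distinct edges give distinct, non-multiply-covered orbits. Goodness of the cone is exactly what guarantees that the isotropy along an edge is a circle (rather than a finite extension producing an orbifold circle or identifications), so the count is clean. I would conclude by noting that nondegeneracy of these $m$ orbits, needed elsewhere, follows from condition (ii) together with the explicit index computations carried out in the subsequent part of the proof, but the orbit count itself only uses (i) and (ii).
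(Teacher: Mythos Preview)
Your argument is correct and follows essentially the same route as the paper: the Reeb orbit through $p$ is dense in the $\T^{n+1}$-orbit through $p$ (by condition (ii)), hence closed precisely when that torus orbit is one-dimensional, i.e.\ when $\mu(p)$ lies on an edge. The paper's proof is simply the terse version of this, working directly with the moment map on $W$ without the lift to $\C^d$; your extra bookkeeping about goodness and distinctness of the resulting simple orbits is not wrong, just more than the paper bothers to spell out.
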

\begin{proof}
Under the moment map $\mu:W\to C\in(\R^{n+1})^\ast$, any $R_\nu$-orbit $\gamma$
is mapped to a single point $p\in C$. The pre-image $\mu^{-1}(p)$ is a 
$\T^{n+1}$-orbit and the fact that $\nu$ generates a dense $1$-parameter subgroup
of $\T^{n+1}$ implies that the $R_\nu$-orbit $\gamma$ is dense in $\mu^{-1}(p)$.
Hence, $\gamma$ is closed iff $\dim (\mu^{-1}(p)) = 1$, and this happens iff
$p$ belongs to a $1$-dimensional face of $P$, i.e. an edge.
\end{proof}
\begin{remark} \label{rmk:multiple}
If the toric symplectic cone $W$ is not simply connected, the simple closed Reeb
orbit $\gamma$ associated to an edge $E$ of the moment cone $C$ might not be
contractible. However, it follows from Proposition~\ref{prop:pi1} that a finite
multiple of $\gamma$ is contractible and ``simple closed Reeb orbit associated to
$E$'' will always mean ``smallest multiple of $\gamma$ that is contractible''.
\end{remark}

Let $E_1, \ldots, E_m$ denote the edges of $C$ and $\gamma_1, \ldots, \gamma_m$
the corresponding simple closed orbits of the Reeb vector field $R_\nu$. Since
$C$ is a good cone, each edge $E_\ell$ is the intersection of exactly $n$ facets
$F_{\ell_1}, \ldots, F_{\ell_n}$, whose set of normals
\[
\nu_{\ell_1}, \ldots, \nu_{\ell_n}
\]
can be completed to an integral base of $\Z^{n+1}$. Hence, for each
$\ell = 1, \ldots, m$, we can choose an integral vector $\eta_\ell \in\Z^{n+1}$
such that
\[
\left\{\nu_{\ell_1}, \ldots, \nu_{\ell_n}, \eta_\ell \right\} \quad
\text{is an integral base of $\Z^{n+1}$.}
\]

The map $\beta : \R^d \to \R^{n+1}$ defined by~(\ref{def:beta}) is surjective and
integral ($\beta (\Z^d) \subset \Z^{n+1}$). Hence, for each $\ell = 1, \ldots, m$, there is a 
smallest natural number $N_\ell \in \N$ and an integral vector $\teta_\ell \in \Z^d$ such that
\[
\beta (\teta_\ell) = N_\ell \,\eta_\ell\,.
\]
The Reeb vector field $R_\nu$ can be uniquely written as
\[
R_\nu = \sum_{i=1}^n b^\ell_i \nu_{\ell_i} + b^\ell N_\ell \eta_\ell\,,\ 
\text{with}\ b^\ell_1,\ldots, b^\ell_n, b^\ell \in \R\,,
\]
and we can then lift it to a vector $\tR_\nu^\ell \in \R^{d}$ as
\begin{equation}
\label{eq:vector_lift}
\tR_\nu^\ell = \sum_{i=1}^n b^\ell_i e_{\ell_i} + b^\ell \teta_\ell\,,
\end{equation}
so that
\[
\beta (\tR_\nu^\ell) = R_\nu\,.
\]

\begin{remark}
$N_\ell =$ ``smallest multiple'' considered in Remark~\ref{rmk:multiple}. If
the moment cone $C$ determines a simply connected toric symplectic cone $W$,
then $N_\ell = 1$, $\forall\,\ell=1,\ldots,m$.
\end{remark}

Recall from subsection~\ref{ss:models} that $W = Z / K$, where $K=\ker\beta\subset\T^d$
and
\[
Z=\phi_K^{-1}(0) \setminus\{0\} \equiv\ \mbox{zero level set of moment map in $\C^{d}\setminus\{0\}$.}
\]
The restriction to $Z\subset\C^d$ of the linear flow on $\C^d$ generated by 
$\tR_\nu^\ell$ is a lift of the Reeb flow on $W$ generated by $R_\nu$.
Consider
\begin{align}
Z & \longrightarrow W = Z/K \stackrel{\mu}{\longrightarrow} 
C \subset (\R^{n+1})^\ast \notag \\
z & \longmapsto [z] \notag
\end{align}
We have that 
\[
[z] \in \mu^{-1} (E_\ell) \Leftrightarrow z_{\ell_1} = \cdots = z_{\ell_n} = 0\,.
\]
This implies that $\gamma_\ell$ can be lifted to $Z$ as a closed orbit $\tgamma_\ell$
of $\tR_\nu^\ell$. The periods of $\gamma_\ell$ and $\tgamma_\ell$ are both given by
\[
T_\ell = \frac{2\pi}{b^\ell}
\]
and the linearization of the lifted Hamiltonian Reeb flow on $\C^{d}$ along
$\tgamma_\ell$ is the linear flow generated by $\tR_\nu^\ell$. Note that, by
replacing $\eta_\ell$ with $-\eta_\ell$ if necessary, we can and will assume that
$b^\ell > 0$ for all $\ell = 1, \ldots, m$. We can now use Lemma~\ref{lem:main} 
to assert that
\[
\mu_{CZ} (\gamma_\ell^N) = \mu_{CZ}(\tgamma_\ell^N)
\]
for all $\ell = 1, \ldots, m$ and all iterates $N\in\N$.

To compute $\mu_{CZ}(\tgamma_\ell^N)$ note first that, since $R_\nu$ is
assumed to generate a dense $1$-parameter subgroup of the torus $\T^{n+1}$,
we have that the closure of the $1$-parameter subgroup of $\T^d$ generated
by $\tR^\ell_\nu$ is a torus of dimension $n+1$. That immediately implies
that the $n+1$ real numbers
\[
\left\{b_1^\ell, \ldots, b_n^\ell, b^\ell \right\}
\]
are $\Q$-independent. We can then use Example~\ref{ex:CZ2} and the Direct
Sum property of the Conley-Zehnder index, to conclude that
\begin{align}
\mu_{CZ}(\tgamma_\ell^N) & = 
\sum_{i=1}^n \left(2 \left\lfloor N \frac{b_i^\ell}{b^\ell} \right\rfloor +1\right) +
2N \left( \sum_{j=1}^d (\teta_\ell)_j \right) \notag \\
& = 2 \left( \sum_{i=1}^n \left\lfloor N \frac{b_i^\ell}{b^\ell} \right\rfloor +
N \left( \sum_{j=1}^d (\teta_\ell)_j \right) \right) + n \notag \\
& = \text{even} + n \,.\notag
\end{align}
This implies that the contact homology degree is given by
\[
\deg (\tgamma_\ell^N) = \mu_{CZ}(\tgamma_\ell^N) + n - 2 =
\text{even} + n + n - 2 = \text{even}\,,
\]
which finishes the proof of Theorem~\ref{thm:main1}.

\section{Examples and proof of Theorem~\ref{thm:main2}} 
\label{s:examples}

\subsection{A particular family of good moment cones}

Let $\left\{e_1, e_2, e_3\right\}$ be the standard basis of $\R^3$. For each 
$k\in\N_0$ consider the cone $C(k)\subset\R^{3}$ with $4$ facets defined by the
following $4$ normals:
\begin{align}
\nu_1 & = e_1 + e_{3} = (1,0,1) \notag \\
\nu_2 & = - e_2 + e_{3} = (0, -1, 1) \notag \\
\nu_3 & = k e_2 + e_{3} = (0, k, 1) \notag \\
\nu_4 & = - e_1 + (2k-1) e_2 + e_{3} = (-1, 2k-1, 1) \notag
\end{align}
Each of these cones is good, hence defines a smooth, connected, closed toric 
contact $5$-manifold $(N_k, \xi_k)$. Because all the normals have last coordinate equal to one, 
Remark~\ref{rmk:c_1} implies that the first Chern class of all these contact manifolds is zero. 
Moreover, one can use Proposition~\ref{prop:pi1} to easily check that $N_k$ is simply connected for
all $k\in\N$. In fact, this family of good cones is $SL(3,\Z)$ equivalent to
the family of moment cones associated to the Sasaki-Einstein toric manifolds
$Y^{p,q}$, with $q=1$ and $p=k+1$, constructed by Gauntlett, Martelli, Sparks 
and Waldram in~\cite{GW1} (see also~\cite{MSY}). Hence we have that
\[
(N_k, \xi_k) \cong (S^2 \times S^3, \xi_k)\quad\text{with}\quad c_1 (\xi_k) = 0
\]
and, as hyperplane distributions, the $\xi_k$'s are all homotopic to each other.

When $k=0$ there is a direct way of identifying the toric contact manifold
$(N_0, \xi_0)$. In fact, the cone $C(0)\subset\R^3$ is $SL(3,\Z)$ equivalent to the
cone $C'\subset\R^3$ defined by the following $4$ normals:
\begin{align}
\nu'_1 & = e_1 = (1,0,0) \notag \\
\nu'_2 & = - e_2 + e_{3} = (0, -1, 1) \notag \\
\nu'_3 & = e_{2} = (0, 1, 0) \notag \\
\nu'_4 & = - e_1 + e_{3} = (-1, 0, 1) \notag
\end{align}
One easily checks that $C'$ is the standard cone over the square 
$[0,1]\times [0,1] \subset\R^2$. Hence, $(N_0, \xi_0)$ can be described as the
Boothby-Wang manifold over $(S^2 \times S^2, \omega = \sigma \times \sigma)$, 
where $\sigma (S^2) = 2\pi$. This is also the unit cosphere bundle of $S^3$ and
its Calabi-Yau symplectic cone is known in the physics literature as the conifold.

\begin{remark}
Gauntlett-Martelli-Sparks-Waldram construct in~\cite{GW2} a family of higher 
dimensional generalizations of the manifolds $Y^{p,q}$. They do not describe 
their exact diffeomorphism type and they do not write down the associated moment 
cones. The latter are described in~\cite{Ab} and can be used to show that, contrary to
what happens in dimension five, different cones in this higher dimensional family give 
rise to non-diffeomorphic manifolds.
\end{remark}

\subsection{Contact homology computations}

We will now apply the algorithm of section~\ref{s:proof1} to this family of good moment 
cones: $C(k)\subset\R^{3}$, $k\in\N_0$. We will do it for two different types of Reeb
vector fields.

First, we consider the case when the Reeb vector field $R_\nu \in\Xx (S^2 \times S^3, \xi_k)$ 
is induced by a Lie algebra vector $\nu\in\ft^3 \cong \R^3$ of the form
\[
\nu = (a_1, a_2, a_3) \approx (0,0,1)\,,
\]
with the $a_i$'s $\Q$-independent.

\begin{remark} \label{rmk:k=0}
When $k>0$, these vectors satisfy the requirement of Proposition~\ref{prop:sasaki} 
because the vector $(0,0,1)$ can be written as a positive linear combination of the
normals to $C(k)$:
\[
\frac{1}{3k+2} (\nu_1 + (3k-1) \nu_2 + \nu_3 + \nu_4) = (0,0,1)\,.
\]
When $k=0$, the second coordinate of all the normals is either zero or negative and
so we must have $a_2 < 0$.
\end{remark}

Each cone $C(k)$ has four edges:
\begin{itemize}
\item[(1)] The edge $E_1$, with $\gamma_1$ the corresponding simple closed
$R_\nu$-orbit, is the intersection of the facets $F_1$ and $F_3$ with normals
\[
\nu_1 = (1,0,1) \quad\text{and}\quad \nu_3 = (0,k,1)\,.
\]
The vector $\eta_1\in\Z^3$ can be chosen to be
\[
\eta_1 = \nu_4 = (-1, 2k-1, 1)\,.
\]
In fact, $\{\nu_1,\nu_3, \eta_1=\nu_4\}$ is a $\Z$-basis of $\Z^3$ and
\[
R_\nu = b^1_1 \nu_1 + b^1_2 \nu_3 + b^1 \eta_1
\]
with
\begin{align}
b^1_1 & = (1-k) a_1 - a_2 + k a_3 \notag \\
b^1_2 & = (2k-1) a_1 + 2 a_2 - (2k-1) a_3 \notag \\
b^1 & = - k a_1 - a_2 + k a_3 \notag
\end{align}
\item[(2)] The edge $E_2$, with $\gamma_2$ the corresponding simple closed
$R_\nu$-orbit, is the intersection of the facets $F_1$ and $F_2$ with normals
\[
\nu_1 = (1,0,1) \quad\text{and}\quad \nu_2 = (0,-1,1)\,.
\]
The vector $\eta_2\in\Z^3$ can be chosen to be
\[
\eta_2 = 2 \nu_3 - \nu_4 = (1, 1, 1)\,.
\]
In fact, $\{\nu_1,\nu_2, \eta_1=2\nu_3 - \nu_4\}$ is a $\Z$-basis of $\Z^3$ and
\[
R_\nu = b^2_1 \nu_1 + b^2_2 \nu_2 + b^2 \eta_2
\]
with
\begin{align}
b^2_1 & = 2 a_1 - a_2 - a_3 \notag \\
b^2_2 & = - a_1 + a_3 \notag \\
b^2 & = - a_1 + a_2 + a_3 \notag
\end{align}
\item[(3)] The edge $E_3$, with $\gamma_3$ the corresponding simple closed
$R_\nu$-orbit, is the intersection of the facets $F_3$ and $F_4$ with normals
\[
\nu_3 = (0,k,1) \quad\text{and}\quad \nu_4 = (-1,2k-1,1)\,.
\]
The vector $\eta_3\in\Z^3$ can be chosen to be
\[
\eta_3 = \nu_1 = (1, 0, 1)\,.
\]
In fact, $\{\nu_3,\nu_4, \eta_3=\nu_1\}$ is a $\Z$-basis of $\Z^3$ and
\[
R_\nu = b^3_1 \nu_3 + b^3_2 \nu_4 + b^3 \eta_3
\]
with
\begin{align}
b^3_1 & = (2k-1) a_1 + 2 a_2 - (2k-1) a_3 \notag \\
b^3_2 & = -k a_1 - a_2 + k a_3 \notag \\
b^3 & = (1-k) a_1 - a_2 + k a_3 \notag
\end{align}
\item[(4)] The edge $E_4$, with $\gamma_4$ the corresponding simple closed
$R_\nu$-orbit, is the intersection of the facets $F_2$ and $F_4$ with normals
\[
\nu_2 = (0,-1,1) \quad\text{and}\quad \nu_4 = (-1,2k-1,1)\,.
\]
The vector $\eta_4\in\Z^3$ can be chosen to be
\[
\eta_4 = 2\nu_3 - \nu_1 = (-1, 2k, 1)\,.
\]
In fact, $\{\nu_2,\nu_4, \eta_4 = 2 \nu_3 - \nu_1 \}$ is a $\Z$-basis of $\Z^3$ and
\[
R_\nu = b^4_1 \nu_2 + b^4_2 \nu_4 + b^4 \eta_4
\]
with
\begin{align}
b^4_1 & = a_1 + a_3 \notag \\
b^4_2 & = -(2k+1) a_1 - a_2 - a_3 \notag \\
b^4 & =  2k a_1 + a_2 + a_3 \notag
\end{align}
\end{itemize}

We can now compute the Conley-Zehnder index of all closed $R_\nu$ orbits, which
coincides in the $n=2$ case with the contact homology degree:
\begin{align}
\mu_{CZ} (\gamma_1^N) & = 2 \left\lfloor\frac{N}{k}\right\rfloor + \sign (a_1) +
\begin{cases}
1 &\text{if $N\neq$ multiple of $k$;} \\
\sign (a_2)  &\text{if $N=$ multiple of $k$.}
\end{cases} 
\notag \\
\mu_{CZ} (\gamma_2^N) & = 2 N + \sign (a_1) - \sign (a_2) \notag \\
\mu_{CZ} (\gamma_3^N) & = 2 \left\lfloor\frac{N}{k}\right\rfloor - \sign (a_1) +
\begin{cases}
1 &\text{if $N\neq$ multiple of $k$;} \\
\sign ((2k-1)a_1 + a_2)  &\text{if $N=$ multiple of $k$.}
\end{cases} 
\notag \\
\mu_{CZ} (\gamma_4^N) & = 2 N - \sign ((2k-1)a_1 + a_2) - \sign (a_1) \notag
\end{align}

To determine the rank of the contact homology groups, we can assume for example
that $a_1,a_2 < 0$ and get 
\begin{equation*}
\begin{array}{|c|c|c|c|c|c|c|} \hline
  \deg & \quad   0 \quad  & \quad 2 \quad & \quad 4 \quad & \quad 6 \quad & \quad 8 \quad & 
  \cdots
\\ 
\hline
\gamma_1   &     k      &    k    &   k  &  k & k & \cdots
\\ 
\hline 
\gamma_2   &     --      &    1    &   1  &  1 & 1 & \cdots
\\ 
\hline
\gamma_3  &     --      &    k    &   k  &  k & k & \cdots
\\ 
\hline 
\gamma_4  &     --     &    --   &   1  &  1 & 1 & \cdots
\\ 
\hline 
\rank &   k   & 2k+1 & 2k+2 & 2k+2 & 2k+2 & \cdots
\\
\hline
\end{array}
\end{equation*}
Another possibility would be to assume that $a_1>0$, $a_2 < 0$ and $(2k-1)a_1+a_2 < 0$. 
We would then get
\begin{equation*}
\begin{array}{|c|c|c|c|c|c|c|} \hline
  \deg & \quad   0 \quad  & \quad 2 \quad & \quad 4 \quad & \quad 6 \quad & \quad 8 \quad & 
  \cdots
\\ 
\hline
\gamma_1   &    --      &    k    &   k  &  k & k & \cdots
\\ 
\hline 
\gamma_2   &     --      &   --    &   1  &  1 & 1 & \cdots
\\ 
\hline
\gamma_3  &     k      &    k    &   k  &  k & k & \cdots
\\ 
\hline 
\gamma_4  &    --    &    1   &   1  &  1 & 1 & \cdots
\\ 
\hline 
\rank &   k   & 2k+1 & 2k+2 & 2k+2 & 2k+2 & \cdots
\\
\hline
\end{array}
\end{equation*}

Following a suggestion of Viktor Ginzburg, we will now consider a second type of
Reeb vector fields, namely those that are arbitrarily close to one of the normals of 
the cone $C(k)\in\R^3$.

More precisely, consider
\[
R_\nu = \sum_{i=1}^4 \eps_i \nu_i = (a_1, a_2, a_3)\,,
\]
which means that
\[
a_1 = \eps_1 - \eps_4\,,\quad a_2 = -\eps_2 + k \eps_3 + (2k-1) \eps_4
\quad\text{and}\quad a_3 = \eps_1 + \eps_2 + \eps_3 + \eps_4\,, 
\quad \eps_i > 0\,,\ i=1,\ldots, 4\,.
\]
Using the already determined formulas for $R_\nu$, we have that
\begin{itemize}
\item[(1)] On the edge $E_1$, where $\{\nu_1,\nu_3, \eta_1=\nu_4\}$ is the relevant 
$\Z$-basis, we can write
\[
R_\nu = (\eps_1 + (k+1)\eps_2) \nu_1 + (-(2k+1)\eps_2 + \eps_3) \nu_3 + ((k+1)\eps_2 + \eps_4) \eta_1\,.
\]
\item[(2)] On the edge $E_2$, where $\{\nu_1,\nu_2, \eta_2=2\nu_3 - \nu_4\}$ is the relevant 
$\Z$-basis, we can write
\[
R_\nu = (\eps_1 - (k+1)\eps_3 - 2(k+1)\eps_4) \nu_1 + (\eps_2 + \eps_3 + 2\eps_4) \nu_2 + 
((k+1)\eps_3 + (2k+1)\eps_4) \eta_2\,.
\]
\item[(3)] On the edge $E_3$, where $\{\nu_3,\nu_4, \eta_3 = \nu_1\}$ is the relevant 
$\Z$-basis, we can write
\[
R_\nu = (- 2(k+1)\eps_2 + \eps_3) \nu_3 + ((k+1)\eps_2 + \eps_4) \nu_4 + 
(\eps_1 + (k+1)\eps_2) \eta_3\,.
\]
\item[(4)] On the edge $E_4$, where $\{\nu_2,\nu_4, \eta_4 = 2\nu_3 - \nu_1\}$ is the relevant 
$\Z$-basis, we can write
\[
R_\nu = (2\eps_1 + \eps_2 + \eps_3) \nu_2 + (-2(k+1)\eps_1 - (k+1)\eps_3 + \eps_4) \nu_4 + 
((2k+1)\eps_1 + (k+1)\eps_3) \eta_4\,.
\]
\end{itemize}
We can now make $R_\nu$ arbitrarily close to a normal $\nu_j$ by considering
the $\eps_i$'s, with $i\ne j$, to be arbitrarily small positive numbers and $\eps_j \approx 1$.

Let us start with the case
\[
R_\nu \approx \nu_1\,.
\]
\begin{itemize}
\item[(1)] On the edge $E_1$ we have that
\[
R_\nu \approx \nu_1 + \eps \nu_3 + \eps \eta_1\,,
\]
with $\eps>0$ an arbitrarily small number. This implies that
\[
\mu_{CZ} (\gamma_1^N)  \approx \frac{2N}{\eps}
\]
can be made arbitrarily large for any $N\in\N$ and so $\gamma_1^N$ gives no contribution to
contact homology up to an arbitrarily large degree.
\item[(2)] The same happens for $\gamma_2^N$ since on the edge $E_2$ we have that
\[
R_\nu \approx \nu_1 + \eps \nu_2 + \eps \eta_2\,.
\]
\item[(3)] On the edge $E_3$ we have that
\[
R_\nu \approx \eps\nu_3 + \eps \nu_4 + \eta_3\,,
\]
with $\eps>0$ arbitrarily small. This implies that
\[
\mu_{CZ} (\gamma_3^N)  = 2N \quad\text{for}\quad N \approx 1, \ldots, \frac{1}{\eps}\,,
\]
and so $\gamma_3^N$ gives a rank $1$ contribution to contact homology in all positive
even degrees up to the arbitrarily large $1/\eps$.
\item[(4)] On the edge $E_4$ we have that
\[
R_\nu \approx (2+\eps)\nu_2 + (-2(k+1) + \eps) \nu_4 + (2k+1) \eta_4\,,
\]
with $\eps$ arbitrarily small. This implies a particularly interesting
behaviour for the Conley-Zehnder index of $\gamma_4^N$. In fact, when 
$m(2k+1)-2k \leq N \leq m(2k+1)$ for some $m\in\N$ and up to an arbitrarily large $N\in\N$,
we have that
\[
\mu_{CZ} (\gamma_4^N)  =
\begin{cases}
2m-2 & \text{if $m(2k+1) - 2k \leq N \leq m(2k+1) -k-1$;} \\
2m  & \text{if $m(2k+1) - k \leq N \leq m(2k+1) -1$;} \\
2m+2 & \text{if $N = m(2k+1)$.} 
\end{cases} 
\]
For $N\geq k+1$ this can also be written as
\[
\mu_{CZ} (\gamma_4^N)  =
\begin{cases}
2m  & \text{if $m(2k+1) - k \leq N \leq m(2k+1) -1$;} \\
2m+2 & \text{if $N = m(2k+1)$;} \\
2m & \text{if $m(2k+1) + 1 \leq N \leq m(2k+1) +k$;} 
\end{cases} 
\]
and we see that in this case the Conley-Zehnder index is not monotone with respect to $N$.
\end{itemize}
Hence, when $R_\nu \approx  \nu_1$, the rank of the contact homology groups is determined from
the following table:
\begin{equation*}
\begin{array}{|c|c|c|c|c|c|c|} \hline
  \deg & \quad   0 \quad  & \quad 2 \quad & \quad 4 \quad & \quad 6 \quad & \quad 8 \quad & 
  \cdots
\\ 
\hline
\gamma_1   &     --      &   --    &   --  &  -- & -- & \cdots
\\ 
\hline 
\gamma_2   &     --      &    --    &   --  &  -- & -- & \cdots
\\ 
\hline
\gamma_3  &     --      &    1    &   1  &  1 & 1 & \cdots
\\ 
\hline 
\gamma_4  &     k     &    2k   &   2k+1  &  2k+1 & 2k+1 & \cdots
\\ 
\hline 
\rank &   k   & 2k+1 & 2k+2 & 2k+2 & 2k+2 & \cdots
\\
\hline
\end{array}
\end{equation*}
In this case we have that all the interesting contact homology information is concentrated on just
one closed Reeb orbit (and its multiples): $\gamma_4$. 

When $R_\nu \approx \nu_4$ we obtain a similar picture, with all interesting contact homology information
concentrated on $\gamma_2$ and its multiples. In this case, and up to an arbitrarily large contact homology
degree, $\gamma_3^N$ and $\gamma_4^N$ contribute nothing, while $\gamma_1^N$ gives a rank $1$ 
contribution to degree $2N$.

When $R_\nu \approx \nu_2$ we have that $\gamma_2^N$ and $\gamma_4^N$ contribute nothing, while 
$\gamma_1^N$ and $\gamma_3^N$ contribute about half the rank of contact homology each. 
When $R_\nu \approx \nu_3$ we have that $\gamma_1^N$ and $\gamma_3^N$ contribute nothing, while 
$\gamma_2^N$ and $\gamma_4^N$ contribute about half the rank of contact homology each. 

In any case, and for any $k\in\N_0$, the final result is
\[
\rank HC_\ast (S^2  \times S^3, \xi_k ; \Q) =
\begin{cases}
k & \text{if $\ast = 0$;} \\
2k+1  & \text{if $\ast = 2$;} \\
2k+2 & \text{if $\ast > 2$ and even;} \\
0 & \text{otherwise.}
\end{cases} 
\]

\end{document}